\newtheorem{claim}{Claim}
\theoremstyle{thmstyleone}%
\newtheorem{theorem}{Theorem}%  meant for continuous numbers
\newtheorem{lemma}{Lemma}
\theoremstyle{thmstyletwo}%
\newtheorem{corollary}{Corollary}
\theoremstyle{thmstylethree}%
\newtheorem{definition}{Definition}%
\begin{document}

\title[Article Title]{Quantum Computing Inspired Iterative Refinement for Semidefinite Optimization}

%%=============================================================%%
%% Prefix	-> \pfx{Dr}
%% GivenName	-> \fnm{Joergen W.}
%% Particle	-> \spfx{van der} -> surname prefix
%% FamilyName	-> \sur{Ploeg}
%% Suffix	-> \sfx{IV}
%% NatureName	-> \tanm{Poet Laureate} -> Title after name
%% Degrees	-> \dgr{MSc, PhD}
%% \author*[1,2]{\pfx{Dr} \fnm{Joergen W.} \spfx{van der} \sur{Ploeg} \sfx{IV} \tanm{Poet Laureate} 
%%                 \dgr{MSc, PhD}}\email{iauthor@gmail.com}
%%=============================================================%%

\author[1]{\fnm{Mohammadhossein} \sur{Mohammadisiahroudi}}\email{mom219@lehigh.edu}

\author[2]{\fnm{Brandon} \sur{Augustino}}\email{baug@mit.edu}
% \equalcont{These authors contributed equally to this work.}

\author[1]{\fnm{Pouya} \sur{Sampourmahani}}\email{pos220@lehigh.edu}
% \equalcont{These authors contributed equally to this work.}

\author*[1]{\fnm{Tam\'as} \sur{Terlaky}}\email{terlaky@lehigh.edu}
% \equalcont{These authors contributed equally to this work.}

\affil*[1]{\orgdiv{Department of Industrial and Systems Engineering}, \orgname{Lehigh University}, \orgaddress{\street{200 West Packer Avenue}, \city{Bethlehem}, \postcode{18015}, \state{PA}, \country{USA}}}

\affil[2]{\orgdiv{Sloan School of Management}, \orgname{Massachusetts Institute of Technology}, \orgaddress{\street{100 Main Street}, \city{Cambridge}, \postcode{02142}, \state{MA}, \country{USA}}}

%%==================================%%
%% Abstract %%
%%==================================%%

\abstract{Iterative Refinement (IR) is a classical computing technique for obtaining highly precise solutions to linear systems of equations, as well as linear optimization problems. In this paper, motivated by the limited precision of quantum solvers, we develop the first IR scheme for solving semidefinite optimization (SDO) problems and explore two major impacts of the proposed IR scheme. First, we prove that the proposed IR scheme exhibits quadratic convergence toward an optimal solution without any assumption on problem characteristics. 
%On the practical side, IR can be useful to find precise solutions while using inexact SDO solvers. 
We also show that using IR with Quantum Interior Point Methods (QIPMs) leads to exponential improvements in the worst-case overall running time of QIPMs, compared to previous best-performing QIPMs. We also discuss how the proposed IR scheme can be used with classical inexact SDO solvers, such as classical inexact IPMs with conjugate gradient methods.}

\keywords{Iterative Refinement, Semidefinite Optimization, Quantum Interior Point Methods, Quadratic Convergence}

%%\pacs[JEL Classification]{D8, H51}

%%\pacs[MSC Classification]{35A01, 65L10, 65L12, 65L20, 65L70}

\maketitle

\section{Introduction}\label{intro}
Semidefinite optimization (SDO) constitutes a class of convex optimization problems that has a wide range of applications, however, its solution is computationally challenging. 
Letting $b \in \Rmbb^{m}$ and consider matrices $A_1, \dots, A_m, C \in \Scal^n$,
where $\Scal^n$ is the subspace of $n \times n$ symmetric
matrices. The primal SDO problem can be written in standard form as follows:
\begin{align}\label{e:SDO}\tag{P}
     z_P &= \inf_X \left\{ C \bullet X : A_i \bullet X = b_i~\text{for all } i \in [m], X \succeq 0 \right\},
\end{align}
where $[m]$ denotes the set $\{1, \dots, m\}$,  $U \bullet V$ denotes the trace inner product of symmetric matrices $U$ and $V$, i.e., $\trace{} (UV)$.
The notation $U \succeq V$ ($U \succ V$) indicates the \textit{L\"owner} order, i.e., $U - V$ is a symmetric positive semidefinite (positive definite) matrix. 
Without loss of generality, we assume the matrices $A_1, \dots, A_m$ are linearly independent. 
The dual problem associated with~\eqref{e:SDO} is 
\begin{align}\label{e:SDD}\tag{D}
    z_D &=  \sup_{y, S} \left\{ b^{\top} y:\sum_{i \in [m]} y_i A_i + S = C,~S\succeq 0, y \in \Rmbb^{m} \right\},
\end{align}
where $S \in \Scal^n$ denotes the slack matrix of the dual problem, i.e.,
\begin{equation*}\label{e:slackm}
  S = C- \sum_{i \in [m]} y_i A_i  \succeq 0. 
\end{equation*}
Linear optimization problems (LOPs) are a special case of semidefinite optimization problems (SDOPs) in which the input matrices are diagonal. 
Duality results for LO are stronger than those for SDO, except when there exist a primal feasible $X$ and dual feasible $(y, S)$ with $X, S \succ 0$, i.e., when the so-called \textit{Interior Point Condition} (IPC) holds. 
When the IPC is satisfied, optimal solutions exist for both the primal and dual SDO problems, and there is no duality gap, i.e., $z_P = z_D$.

Semidefinite optimization is routinely used to model problems in statistics, information theory \cite{rains2001semidefinite}, control \cite{boyd1994linear}, machine learning \cite{lanckriet2004learning, weinberger2006unsupervised}, as well as finance \cite{d2007direct, wolkowicz2012handbook}, and quantum information science \cite{aaronson2018shadow, eldar2003semidefinite, watrous2009semidefinite}. 
SDO has also been successfully applied to approximate the solution to combinatorial optimization problems, the most notable being the Goemans and Williamson SDO approximation of MaxCut \cite{goemans1995improved} and the Lov\'asz $\vartheta$-number \cite{lovasz1979shannon}.
In addition to LO, SDO generalizes other notable classes of convex optimization problems, such as Second-order Cone Optimization (SOCO) \cite{sam2024on}, and Quadratically Constrained Quadratic Optimization (QCQO) \cite{wolkowicz2012handbook}.
Further, SDOPs are known to exhibit
tractability \cite{alizadeh1998primal, nesterov1988general}, and can be used to study the properties of convex optimization problems \cite{boyd1994linear}. 

Interior Point Methods (IPMs) are the prevailing approach for solving SDOPs due to their polynomial complexity and rapid convergence \cite{nesterov1988general, nesterov1994interior}. 
However, IPMs for SDO pay a significant cost per iteration for large-scale problems; when $m = \Ocal \left(n^2\right)$ each iteration necessitates solving a Newton system of size $n^2 \times n^2$. 
Historically, IPMs relied on the use of direct methods solving linear systems of equations (e.g., Cholesky decomposition) to solve the Newton linear system. Consequently, the per-iteration cost scales as $\Ocal \left(n^6 \right)$, rendering the solution of large-scale SDO problems impractical.

These challenges, even before the birth of quantum computing, motivated the proposal of \textit{inexact} IPMs, in which the Newton linear systems are \textit{approximately} solved using iterative linear systems algorithms, such as the conjugate gradient method (CGM) \cite{bellavia2004convergence,zhou2004polynomiality}. 
Another type of Inexact IPM is ABIP method \cite{deng2022new} in which an ADMM method is used to solve the subproblems. 
Despite enjoying lower cost per iteration compared to exact IPMs, inexact IPMs are compromised by their inefficiency in achieving high precision solutions, particularly in their application to solving ill-conditioned or degenerate problems.
More recent developments for IPMs for SDO \cite{huang2022solving, jiang2020faster} seek to reduce the per-iteration cost by making use of fast matrix multiplication and data structures to efficiently maintain an approximation of the Hessian inverse. 
This approach allows one to \textit{amortize} the cost of exactly solving the Newton system over the run of the algorithm. Although, by making some additional assumptions,  these algorithms achieve state-of-the-art theoretical running times, to the best of our knowledge, they have not led to a successful practical implementation.

We also remark that IPMs are not the only approach for solving SDOPs.
Indeed, some researchers reformulated SDOPs as nonlinear optimization problems \cite{burer2003nonlinear} and employed first-order methods for their solution \cite{monteiro2003first, renegar2014efficient}. 
Another first-order method used for solving SDP is the Matrix-Multiplicative Weights Update method \cite{arora2006multiplicative}.

Since quantum computing emerged as a new paradigm for computing to speed up solutions to some mathematical problems, several efforts have been made to quantize optimization methods.
Quantum linear systems algorithms (QLSAs) \cite{harrow2009quantum} are a class of algorithms that use \textit{Hamiltonian simulation} to prepare a quantum state that is proportional to the solution of a linear system. 
These developments sparked the development of Quantum IPMs (QIPMs) \cite{augustino2023quantum, kerenidis2020quantum}, in which quantum subroutines are used to reduce cost per iteration in IPMs.

\subsection{Related work}
\textbf{Quantum interior point methods.} 
The first work on quantum IPMs (QIPMs) is attributed to Kerenidis and Prakash \cite{kerenidis2020quantum}, who presented QIPMs for LO and SDO. 
Their work sought to leverage QLSAs to accelerate the solution of the Newton linear system. 
The QLSA with block encodings can solve the Newton system in time polylogarithmic in $n$, albeit at a cost of linear dependence on the condition number. 
To proceed to the next iteration, a tomography subroutine is used to obtain a classical estimate of the solution obtained from applying a QLSA, which introduces additional overhead in the dimension and the error to which the estimation is performed.
Though their work provided a foundation for realizing QIPMs, critical challenges inherent to quantizing IPMs, and the subsequent consequences, were overlooked. 
In their QIPM, the Newton systems solved at each iteration are not correct, since there are no steps to guarantee symmetry. 
In addition, the complexity analysis of their QIPM did not properly account for the errors resulting from inexact tomography.

The first provably convergent QIPMs for SDO were developed by Augustino et al.~\cite{augustino2023quantum}, who both quantized an Infeasible-Inexact IPM and presented a novel Inexact-Feasible QIPM. 
In the first approach, they solved a Newton linear system where the right hand has residuals to capture the infeasibility and inexactness of the search direction. 
In the second approach, they recover a feasible IPM framework by using a nullspace representation of the search directions in the Newton linear system to ensure that the feasibility of the iterates is always guaranteed. 
The IF-QIPM can be shown to solve SDO problems involving $n \times n$ matrices and $m = \Ocal(n^2)$ constraints using at most\footnote{The $\widetilde{\Ocal}_{\alpha, \beta} \left( g(x) \right)$ notation indicates that quantities polylogarithmic in $\alpha, \beta$ and $g(x)$ are suppressed.}  
$$
   \widetilde{\Ocal}_{n, \kappa, \frac{1}{\epsilon}} \left(  n^{3.5}  \frac{\kappa^2}{\epsilon}  \right)
$$
accesses to a QRAM and $\widetilde{\Ocal}_{n, \kappa, \frac{1}{\epsilon}}\left(  n^{4.5}   \right)$ arithmetic operations, where $\kappa$ is an upper bound for the condition number of Newton systems, and $\epsilon$ is the target optimality gap.

The techniques introduced in \cite{augustino2023quantum} have subsequently been specialized to Linear Optimization \cite{Mohammadisiahroudi2023QIPM, mohammadisiahroudi2022efficient, mohammadisiahroudi2023inexact} and Linearly Constrained Quadratic Optimization \cite{wu2023inexact}. 
Huang et al.~\cite{huang2022faster} gave a QIPM for SDO by quantizing a robust dual-IPM framework. Their algoritm has analogous polynomial dependence of $\kappa$ and $\frac{1}{\epsilon}$. Observe, that the iterative refinement scheme we propose in this paper relies on both primal and dual information for progression between iterates, and it is unclear whether our framework can be adapted to a dual-only setting. 
Summarizing, without empirical evidence to suggest otherwise, the dependence on $\kappa$ and $\frac{1}{\epsilon}$ suggests that the speedups in $n$ obtained by QIPMs for SDO are insufficient to obtain an overall speedup over the classical algorithm. 
Rather, it would appear that QIPMs are exponentially slower than their classical counterpart, whose complexity does not depend on $\kappa$, and only logarithmically depend on $\frac{1}{\epsilon}$. 

Very recently, Apers and Gribling \cite{apers2023quantum} proposed a QIPM for LO that avoids dependence on a condition number. 
Under some mild assumptions and access to QRAM, their framework achieves a quantum speedup for ``tall'' LOPs, in which the number of constraints is much larger than the number of variables (i.e., $m \gg n$). 
Rather than using QLSAs to solve the Newton system, the Newton steps are computed via spectral approximations of the Hessian.
While it is an interesting open question whether the techniques from \cite{apers2023quantum} can be generalized to SDO, it has been posited in the classical literature that sampling is unlikely to speed up Hessian computation as it does in IPMs for LO \cite{jiang2020faster}. 
Therefore, we seek another avenue for alleviating the condition of number dependence. 

In the papers \cite{mohammadisiahroudi2022efficient, mohammadisiahroudi2023inexact}, the authors used an iterative refinement (IR) method to avoid an exponential time of finding an exact optimal solution. 
The basic idea in the context of QIPMs can be understood as follows: rather than solve the optimization problem at hand using a QIPM run to high precision, the IR algorithm treats the QIPM as a low-precision oracle to solve a sequence of optimization problems to constant accuracy and use these intermediate solutions to construct a high-precision solution. 

\textbf{Iterative refinement.} Iterative refinement stands as a widely adopted technique for enhancing numerical accuracy in the resolution of linear systems of equations \cite{wilkinson2023rounding, golub2013matrix}. 
Pioneering this approach for LO problems, Gleixner et al. \cite{Gleixner2016_Iterative} introduced the first iterative refinement methodology for constrained optimization problems. They applied this technique to solve LO problems precisely, while leveraging limited-precision oracles \cite{Gleixner2020_Linear}. 
Additionally, Mohammadisiahroudi et al.~\cite{mohammadisiahroudi2022efficient} utilized IR methods to obtain exact solutions for LO problems in the context of limited-precision QIPMs. 
Their work demonstrated that IR can partially alleviate the impact of ill-conditioned Newton systems on the computational complexity of QIPMs.
Building upon these foundations, our paper introduces a novel extension of this concept. 
We propose the first IR method tailored specifically for SDOPs. 
Notably, SDO presents a computationally more demanding scenario compared to LO and linear system problems. 
Through this extension, we aim to explore the potential of IR in mitigating challenges in the SDO domain.

\textbf{Convergence rates.} Analysis of convergence of IPMs for SDO is more complicated than those of LO \cite{ji1999local}. 
It is a challenge to show the superlinear convergence of an IPM with polynomial iteration complexity for SDO \cite{sim2023superlinear}. 
The first papers that demonstrated superlinear convergence of an IPM for SDO were by Kojima et al. \cite{kojima1998local} and Potra and Sheng \cite{potra1998superlinearly}. 
However, the former had to make certain key assumptions including nondegeneracy, strict complementarity, and tangential convergence to the central path for their proofs, and the latter proposed a sufficient condition to establish the result, both for an infeasible IPM. 
Later, Luo et al.~\cite{luo1998superlinear} proved superlinear convergence without nondegeneracy assumption.
Potra and Sheng~\cite{potra1998superlinearly} showed that quadratic convergence can be achieved under further assumptions. 
Later, Kojima et al.~\cite{kojima1999predictor} proved the quadratic convergence for an infeasible IPM using Alizadeh-Haeberly-Overton (AHO) direction. 
However, showing the same result for other directions, such as the Nesterov-Todd (NT) direction, is not easy. 
To show the superlinear convergence for these search directions, in addition to the assumptions mentioned earlier, modifications to the algorithm, such as solving the corrector-step linear system in an iteration repeatedly instead of only once, are required \cite{sim2023superlinear}.
We show that the proposed IR method has quadratic convergence toward the optimal solution set, even degenerate problems, and/or for problems that do not have  strictly complementary solutions.

\subsection{Contributions of this paper}\label{contrib}
The contributions of this paper are outlined as follows:
\begin{itemize}
\item \textbf{Introduction of Iterative Refinement Methods for SDO:} We present the first Iterative Refinement methods designed for finding precise solutions of SDO problems utilizing limited-precision oracles. This method holds the potential to bridge the gap between exact IPMs with computationally expensive iterations and inexact SDO solvers characterized by slow convergence.

\item \textbf{Quadratic Convergence of the Proposed IR Method:}
We demonstrate that the proposed IR method exhibits quadratic convergence towards the optimal solution set of the SDO problem. This noteworthy result holds even in cases where strict complementarity fails, yielding the first quadratically convergent IPM and QIPM for SDO. 

\item \textbf{Development of IR-IF-QIPM for SDO:}
We introduce an Iteratively Refined Inexact Feasible Quantum Interior Point Method (IR-IF-QIPM) for SDO. This novel approach demonstrates improved computational complexity concerning dimension compared to classical IPMs and significantly improved precision dependence compared to previous QIPMs. To obtain inexact but feasible Newton directions in the proposed IR-IF-QIPM, we used the recently proposed Orthogonal Subspaces System \cite{augustino2023quantum, mohammadisiahroudi2023inexact}. Furthermore, we illustrate how IR can effectively mitigate the impact of ill-conditioned Newton systems on the complexity of QIPMs.

\item \textbf{Variants of the Proposed IR Method:} Different versions of the proposed IR method are tailored for scenarios where the limited-precision oracle produces solutions with limited infeasibility errors. Additionally, the classical counterpart of the proposed IR-IF-QIPM is presented in Appendix~\ref{sec: CGM}, utilizing CGMs to inexactly determine the Newton direction.
\end{itemize}

The rest of this paper is structured as follows. 
In Section~\ref{sec: prelim}, we review some essential preliminary concepts pertinent to IPMs for SDO. 
We present our IR method for SDO in Section~\ref{sec: IR}. 
Section~\ref{sec: convergence} is dedicated to the convergence analysis of the proposed IR method.
Section~\ref{sec: qipm} presents the proposed IR-IF-QIPM designed for solving SDO problems.
Section~\ref{sec: conclusion} concludes the paper.
In addition, infeasible variants of the proposed IR are presented in Appendix~\ref{sec: otheIR}, and the classical version, an IR-IF-IPM using CGM is developed in Appendix~\ref{sec: CGM}.

\section{Preliminaries}\label{sec: prelim}
We denote the quantity $a$ to the $k$-th power by $a^{(k)}$, and the notation $a^{(k)}$, indicates the value of $a$ at iteration $k$ of an algorithm.

In what follows, we make extensive use of the matrix functions \textbf{svec} and \textbf{smat}, which are formally defined as follows. 
\begin{definition}
For $U \in \R{n \times n}$, $\textbf{\textup{svec}} (U) \in \R{\frac{1}{2} n (n+1)}$ is given by
    $$ \textup{\textbf{svec}}(U) =\left(u_{11}, \sqrt{2} u_{21}, \dots, \sqrt{2} u_{n1}, u_{22}, \sqrt{2} u_{32} \dots, \sqrt{2} u_{n2}, \dots, u_{nn} \right)^{\top} .$$
Further, the operator $\textup{\textbf{smat}}$ is the inverse operator of $\textbf{\textup{svec}}$. That is, 
$$ \textbf{\textup{smat}} \left[ \textbf{\textup{svec}}  (U) \right] = U. $$
\end{definition}
\noindent The definition of the \textbf{svec} operator gives rise to the definition of the \textit{symmetric} Kronecker product, which we define using the usual Kronecker product. 

\begin{definition}
Consider an $\frac{n(n+1)}{2} \times n^2$ matrix $U$, with 
$$ U_{(i,j), (k,l)} = \begin{cases}
1 &\text{if}~i = j = k = l, \\
1/\sqrt{2} &\text{if}~i = j \neq k = l,~\text{or}~i = l \neq j = k, \\
0 &\text{otherwise}.
\end{cases}$$
Then, the symmetric Kronecker product $G \otimes_s K$ of $G$ and $K$ can be expressed in terms of the standard Kronecker products of $G$ and $K$ as:
$$G \otimes_s K = \frac{1}{2} U \left( G \otimes K + K \otimes G \right) U^{\top}.$$
\end{definition}

\subsection*{Computational cost}
When we refer to complexity, we mean the number of queries to the controlled version of the input oracles and their inverses. We define $\Ocal (\cdot)$ as
$$f(x) = \Ocal(g(x)) \iff \text{ there exist } \gamma \in \R{}, \  c \in {\R{}}_+~\text{such that}~f(x) \leq c g(x) \; \forall \, x > \gamma.$$
We write $f(x) = \Omega (g(x)) \iff g(x) = \Ocal(f(x))$, and we also define $\widetilde{\Ocal} (f(x)) =\Ocal(f(x) \cdot \textup{polylog}(f(x)))$. When the function depends polylogarithmically on other variables, we write $\widetilde{\Ocal}_{a, b} (f(x)) =\Ocal(f(x) \cdot \textup{polylog}(a, b, f(x)))$.

\subsection{Primal and Dual SDOs and the Central Path}\label{s:CentralPath}

Recall that we assume the matrices $A_1, \dots, A_m$ are linearly independent. We define the feasible sets of \eqref{e:SDO} and \eqref{e:SDD} to be
\begin{align*}
    \Pcal &= \left\{X \in \Scal^n~: A_i \bullet X = b_i \text{ for} ~i \in [m], \  X \succeq 0 \right\}, \\
    \Dcal &=  \left\{(y, S) \in \R{m} \times \Scal^n: \sum_{i \in [m]} y_i A_i + S = C, \  S \succeq 0 \right\},
\end{align*}
and the sets of \textit{interior feasible solutions} are given by
\begin{align*}
    \Pcal^{\circ} &= \left\{ X \in \Scal^n ~:~ A_i \bullet X = b_i \text{ for} ~i \in [m], \ X \succ 0 \right\},\\
    \Dcal^{\circ} &= \left\{ (y, S) \in \R{m} \times \Scal^n : \sum_{i \in [m]} y_i A_i + S = C, \  S \succ 0 \right\}.
\end{align*}

Feasible IPMs are predicated on the existence of a strictly feasible primal-dual pair $X \in \Pcal^{\circ}$ and $(y, S) \in \Dcal^{\circ}$. The existence of a strictly feasible initial solution ensures that the IPC is satisfied \cite{de1997initialization}, guaranteeing that the primal and dual optimal sets
\begin{align*}
    \Pcal^* &= \left\{X \in \Pcal :  C \bullet X = z_{P^*} \right\}, \\
    \Dcal^* &=  \left\{(y, S) \in \Dcal :  b^{\top} y = z_{D^*} \right\},
\end{align*}
are nonempty and bounded. More importantly, there exists an optimal primal-dual solution pair with zero duality gap. That is, for optimal solutions
$X^*$ and $(y^*, S^*)$, we have
\begin{equation}
    C \bullet X^* - b^{\top} y^*  = X^* \bullet  S^*  = 0,
\end{equation}
which implies $X^* S^* = S^* X^* = 0$ as $X^*$ and $S^*$ are symmetric
positive semidefinite matrices. In primal-dual IPMs, the
complementarity condition $XS = 0$ that arises from the KKT optimality conditions is perturbed to
\begin{equation}\label{e:CP}
   XS = \mu I,
\end{equation}
where $I$ is the $n \times n$ identity matrix, and $\mu > 0$ is the
\textit{central path parameter}, which is (monotonically) reduced to
zero over the course of the algorithm.

Under the IPC and linear
independence of the matrices $A_1, \dots, A_m$, the central path equation
system
\begin{equation}\label{e:CP2}
    \begin{aligned}
    A_i \bullet X &= b_i \quad \text{ for } i \in [m], &X \succ 0,   \\
    \sum_{i \in [m]} y_i A_i + S &= C,&S \succ 0, \\
    XS &= \mu I, 
\end{aligned}
\end{equation}
has a unique solution for all $\mu > 0$ \cite{nesterov1988general}. Accordingly, the central path of \eqref{e:SDO}-\eqref{e:SDD} is the set of solutions to \eqref{e:CP2} for all $\mu > 0$. IPMs approximately follow the central path as $\mu \to 0$ by iteratively applying Newton's method to system \eqref{e:CP2} in a certain neighborhood of the central path. The best iteration complexity results are obtained using the Frobenius, or narrow neighborhood of the central path:
$$\Ncal_F (\gamma) = \left\{ (X, y, S) \in \Pcal^{\circ} \times \Dcal^{\circ} : \left\| X^{1/2} S X^{1/2} - \mu I \right\|_F \leq \gamma \mu \right\},$$
where $\mu = \frac{X \bullet S}{n}$. 

A classical Feasible IPM, as outlined in Algorithm \ref{alg:IPM} must be initialized to a primal-dual strictly feasible pair $(X,y,S)\in \Pcal^{\circ}\times \Dcal^{\circ}$ for the SDO primal and dual problems \eqref{e:SDO} and \eqref{e:SDD}. 
%This pair has a duality gap of $ X \bullet S = \mu n$, and a distance to the central path of $d(X,S) \leq \gamma \mu$ for $\gamma \in (0,1)$. 
In each iteration, we systematically reduce $\mu \to \sigma \mu$ by a factor $\sigma <1$, and solve the \textit{Newton linear system} in order to update the solutions for the following iterate:
$$X \gets X + \Delta X, \quad 
    S \gets S + \Delta S. $$
In order to apply Newton's method to \eqref{e:CP2}, one needs to linearize the complementarity condition. This yields the following Newton linear system:
\begin{equation}\label{e:KP}
\begin{aligned}
\Delta XS + X\Delta S  &= \mu I - XS \\
\Delta X &\in \text{null}(A_1, \dots, A_m) \\
\Delta S &\in \left( \text{null}(A_1, \dots, A_m) \right)^{\perp}.
\end{aligned}
\end{equation}
It is well-established in the IPM literature that \eqref{e:KP} does not admit a symmetric
matrix solution (more specifically, a symmetric $\Delta X$), see, e.g.,
\cite{alizadeh1998primal}. Additional precautions need therefore to be taken because both the
primal and dual solutions must be symmetric.
 
\subsubsection{Symmetrizing the Newton System}
Here we review well-studied techniques to guarantee symmetry of $\Delta S$ and $\Delta X$ \cite{alizadeh1998primal, nesterov1997self, nesterov1998primal}. 

Adopting the presentation of Zhang \cite{zhang1998extending}, we define the linear transformation $H_P (M)$ that symmetrizes a matrix $M$ for a given invertible matrix $P$ to be:
\begin{equation}
    H_P(M) = \frac{1}{2} \left[ P M P^{-1} + \itran{P} \tran{M} \tran{P} \right].
\end{equation}
The symmetric form of the central path equations is therefore expressed as $H_P (XS) = \mu I$, and from the linearity of $H_P(\cdot)$, we have the symmetrized Newton linear system
\begin{equation}\label{e:tranP}
\begin{aligned}
    H_P (\Delta X S+ X \Delta S ) &= \sigma \mu I - H_P (XS), \\
    \Delta X &\in \text{null}(A_1, \dots, A_m), \\
    \Delta S &\in \left( \text{null}(A_1, \dots, A_m) \right)^{\perp}.
\end{aligned}
\end{equation}
Choosing $P$ to induce specific instances of this system, leads to different search directions that exhibit differing properties.
The class of matrices that are valid choices for $P$ form what is commonly referred to as the Monteiro-Zhang (MZ) family of search directions \cite{monteiro1998polynomial}. 
Table \ref{t:P} from \cite{todd1998nesterov} summarizes commonly employed choices for $P$, and their associated properties.

\begin{table}[h]
    \caption{Properties of Scaling Matrices \cite{todd1998nesterov}}\label{t:P}
    \centering
    \renewcommand{\arraystretch}{1.25}
    \begin{tabular}{llccc}
    \toprule  
    % \hline 
    ~& ~ & Primal-Dual & Scale & Directions  \\
    Direction & $P$ & symmetry & invariance & uniquely defined  \\ \hline 
     NT \cite{nesterov1997self, nesterov1998primal}                         & $W^{-1/2}$ & yes & yes & yes  \\
    HKM \cite{helmberg1996interior, kojima1997interior, monteiro1997primal} & $S^{1/2}$  & no  & yes & yes  \\
    AHO \cite{alizadeh1998primal}                                           & $I$        & yes & no  & no   \\ 
    \botrule
    \end{tabular} 
\end{table}
Note that the Nesterov-Todd \cite{nesterov1998primal} scaling matrix $W$ is defined as:
\begin{equation}\label{e:NTdir}
    \begin{aligned}
    W &= S^{-1/2} (S^{1/2} X S^{1/2})^{1/2} S^{-1/2} \\
&= X^{1/2} (X^{1/2} S X^{1/2})^{-1/2} X^{1/2}.
    \end{aligned}
\end{equation}
It is well established that the Nesterov-Todd direction provides the strongest results. 

\begin{algorithm}[H]
\SetAlgoLined
\KwIn{Choose constants $\gamma$ and $\delta$ in $(0,1)$ and $\sigma = 1 - \frac{\delta}{\sqrt{n}}$.\\ 
\qquad      Choose $\left(X^{(0)}, y^{(0)}, S^{(0)}\right) \in \Ncal_F (\gamma)$.}
    \While{$n\mu^{(k)} > \epsilon$}{ 
    \begin{enumerate}
        \item Choose a nonsingular matrix $P^{(k)} \in \R{n \times n}$.
        \item Compute the solution to the Newton system \eqref{e:tranP} as $(\Delta X^{(k)}, \Delta y^{(k)}, \Delta S^{(k)})$.
        \item Choose a proper steplength $\alpha^{(k)} \in (0,1)$.
        \begin{align*}
           X^{(k+1)} &\leftarrow X^{(k)} + \alpha^{(k)} \Delta X^{(k)},   \\
           S^{(k+1)} & \leftarrow S^{(k)} + \alpha^{(k)} \Delta S^{(k)},~\quad y^{(k+1)} \leftarrow y^{(k)} +  \alpha^{(k)}\Delta y^{(k)}, \\
           \mu^{(k+1)} &\leftarrow  \frac{X^{(k+1)} \bullet S^{(k+1)}}{n}.
        \end{align*}
    \end{enumerate}
  } 
 \caption{Classical IPM}
\label{alg:IPM}
\end{algorithm}

\section{Iterative Refinement for SDO}\label{sec: IR}
In the classical IPM literature for solving LO problems, several studies address the numerical analysis of reaching an \textit{exact} solution, such as the Iterative Refinement scheme by \cite{Gleixner2016_Iterative} and the Rational Reconstruction method by \cite{Gleixner2020_Linear}.
In this section, we propose an Iterative Refinement method to solve SDOPs with high precision using limited-precision oracles. Here, we design a variant of IR for limited-precision oracles that provides interior-feasible solutions with a constant optimality gap $\epsilon$. The proposed IR method, presented as Algorithm \ref{alg:IR-SDO}, is adaptable to feasible IPMs, and we use this method in combination with the IF-(Q)IPM outlined in Section~\ref{sec: qipm}. The analysis of the IR scheme using the classical IF-IPM is provided in Appendix~\ref{sec: CGM}. In Appendix~\ref{sec: otheIR}, we extend this procedure to oracles producing solutions where both optimality gap and infeasibility are bounded by $\epsilon$. We point out that this work also serves as the first effort to develop iterative refinement for general SDO problems.  

The algorithm takes as input the data defining the primal-dual pair \eqref{e:SDO}-\eqref{e:SDD}, and two error tolerances: $\epsilon$, the \textit{fixed} precision to which each oracle call is made; and $\tilde{\epsilon}$, the desired duality gap of the final solution. At each iterate, we make a constant-precision call to the limited-precision oracle with the refining SDO problem data, which in turn reports the refining solution $(\overbar{X}, \bar{y}, \overbar{S})$. From here, the solution to the SDO problem is updated, and we update the duality gap associated with the current solution $X\bullet S$. If $X\bullet S$ has been reduced to, or below $\tilde{\epsilon}$, the algorithm terminates and reports $(X,y,S)$ as an $\tilde{\epsilon}$-optimal solution to \eqref{e:SDO}-\eqref{e:SDD}. Otherwise, we prepare a refining problem for our current solution and proceed to the next iteration.

\begin{algorithm}[h] 
\SetAlgoLined
\KwIn{Problem data $A_1, \dots, A_m, C \in \Scal^n$, $b \in \R{m}$, \\
  \qquad      Error tolerances $0 < \tilde{\epsilon} \ll \epsilon < 1$. \\
  \qquad     Strictly feasible point for \eqref{e:SDO}-\eqref{e:SDD} $(X^{(0)}, y^{(0)}, S^{(0)})$} 
\KwOut{An $\tilde{\epsilon}$-optimal solution pair  $(X, y, S)$ 
to the SDO problem $(A_1, \dots, A_m, b, C)$}
\textbf{Initialize}: $k \gets 1$ \\  
   $(X^{(1)}, y^{(1)}, S^{(1)}) \gets$ \textbf{solve} \eqref{e:SDO}-\eqref{e:SDD} to precision $\epsilon$  \\
   Compute scaling factor  
        $\displaystyle{\eta^{(1)} \gets\frac{1}{X^{(1)} \bullet S^{(1)}}.}$
\\
\While{$X^{(k)}\bullet S^{(k)} > \tilde{\epsilon}$}{
\begin{enumerate}
    \item $(\overbar{X}, \overbar{y}, \overbar{S}) \gets$ \textbf{solve} \eqref{e:SDO_ref}-\eqref{e:SDOD_ref} with input $(A_1, \dots, A_m, 0, X^{(k)}, \eta^{(k)}S^{(k)})$ \\
       to precision $\epsilon.$
    \item Update solution \\
    $\displaystyle{  X^{(k+1)} \gets X^{(k)} + \frac{1}{\eta^{(k)}} \overbar{X},}$ \\
    $\displaystyle{ y^{(k+1)} \gets y^{(k)} + \frac{1}{\eta^{(k)}} \overbar{y}, \quad  S^{(k+1)}\gets C-\sum_{i\in[m]} y_{i}^{(k+1)} A_i.}$\label{irstep}
    \item Update scaling factor $\displaystyle{\eta^{(k+1)} \gets \frac{1}{X^{(k+1)}\bullet S^{(k+1)} }}.$
    \item $k \gets k+1$
\end{enumerate}} 
\caption{Iterative Refinement for SDO}
\label{alg:IR-SDO}
\end{algorithm}

The primal and dual refining problems are presented in Definition \ref{theorem:iterative refinement idea F}.
\begin{definition} \label{theorem:iterative refinement idea F}
Consider the primal-dual SDO pair \eqref{e:SDO}-\eqref{e:SDD} and assume that the IPC is satisfied. Let $(X,y,S)$ be the current solution to \eqref{e:SDO}-\eqref{e:SDD}, and let
$\eta \geq 1$ be the scaling factor. 
The primal \textit{refining} problem is defined as
\begin{equation} \tag{$\bar{P}$} \label{e:SDO_ref}
\min_{\overbar{X}\in \Scal^n} \left\{   \eta S\bullet\overbar{X} :
    A_i \bullet \overbar{X} = 0 \text{ for }i \in [m] \text{ and } \overbar{X} \succeq -\eta X\right\},
\end{equation} 
and its dual problem  
\begin{equation}\tag{$\bar{D}$}\label{e:SDOD_ref}
\max_{(\bar{y}, \overbar{S}) \in \R{m} \times \Scal^n} \left\{  -\eta X \bullet\overbar{S} :
     \sum_{i \in [m]} \bar{y}_i A_i + \overbar{S}=\eta S \text{ and } \overbar{S}\succeq 0\right\}.
\end{equation} 
 
\end{definition}

The following result establishes that the sequence of iterates generated by Algorithm \ref{alg:IR-SDO} are increasingly accurate solutions to the primal-dual pair \eqref{e:SDO}-\eqref{e:SDD}.
\begin{theorem}\label{t:IR_SDO}
Let $\left(X^{(k)},y^{(k)}, S^{(k)}\right)$ be the current (overall) solution, and let $\left(\overbar{X},\bar{y},\bar{S}\right)$ be an $\epsilon$-optimal solution to the refining problem solved at iteration $k +1$ of Algorithm~\ref{alg:IR-SDO}. Then, 
$$\left(X^{(k+1)},y^{(k+1)},S^{(k+1)}\right) = \left(X^{(k)} + \frac{1}{\eta^{(k)}} \overbar{X},y^{(k)} + \frac{1}{\eta^{(k)}} \overbar{y}, C-\sum_{i\in[m]} y_{i}^{(k+1)} A_i\right)$$
is a strictly feasible solution for \eqref{e:SDO}-\eqref{e:SDD}.
\end{theorem}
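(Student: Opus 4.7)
The plan is to verify the four defining conditions for strict feasibility separately: the primal affine constraints, primal positive definiteness, the dual affine constraint, and dual positive definiteness. The proof will proceed by straightforward substitution, exploiting the structure of the refining problems \eqref{e:SDO_ref}-\eqref{e:SDOD_ref}, and an inductive assumption that $(X^{(k)}, y^{(k)}, S^{(k)})$ is itself strictly feasible for \eqref{e:SDO}-\eqref{e:SDD} (the base case $k=1$ is guaranteed by the first call to the oracle in Algorithm~\ref{alg:IR-SDO}).

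First I would verify the primal affine constraints. For each $i \in [m]$, compute
$$A_i \bullet X^{(k+1)} = A_i \bullet X^{(k)} + \tfrac{1}{\eta^{(k)}} A_i \bullet \overbar{X} = b_i + \tfrac{1}{\eta^{(k)}} \cdot 0 = b_i,$$
where the first equality uses the inductive hypothesis $A_i \bullet X^{(k)} = b_i$ and the second uses the refining-problem constraint $A_i \bullet \overbar{X} = 0$ from \eqref{e:SDO_ref}. Next, the dual affine constraint $\sum_{i \in [m]} y_i^{(k+1)} A_i + S^{(k+1)} = C$ holds by construction, since $S^{(k+1)}$ is defined in step~\ref{irstep} of Algorithm~\ref{alg:IR-SDO} precisely as $C - \sum_{i \in [m]} y_i^{(k+1)} A_i$.

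For dual positive definiteness, I would use the dual refining constraint $\sum_{i \in [m]} \bar{y}_i A_i + \overbar{S} = \eta^{(k)} S^{(k)}$ to rewrite
$$S^{(k+1)} = C - \sum_{i \in [m]} y_i^{(k+1)} A_i = S^{(k)} - \tfrac{1}{\eta^{(k)}}\sum_{i \in [m]} \bar{y}_i A_i = \tfrac{1}{\eta^{(k)}} \overbar{S},$$
which is strictly positive definite since $\overbar{S} \succ 0$ as a strictly feasible dual refining iterate and $\eta^{(k)} > 0$. For primal positive definiteness, the refining feasibility condition $\overbar{X} \succeq -\eta^{(k)} X^{(k)}$, strengthened to strict inequality $\overbar{X} \succ -\eta^{(k)} X^{(k)}$ since the oracle produces strictly interior solutions, immediately yields
$$X^{(k+1)} = X^{(k)} + \tfrac{1}{\eta^{(k)}} \overbar{X} \succ X^{(k)} - X^{(k)} = 0.$$

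There is no real obstacle here; the argument is essentially a direct verification relying on the careful design of the refining primal-dual pair in Definition~\ref{theorem:iterative refinement idea F}. The only subtlety worth flagging is that strict (as opposed to weak) positivity of $X^{(k+1)}$ requires that the limited-precision oracle delivers an \emph{interior} feasible refining solution rather than a boundary one; this is consistent with the standing assumption in Section~\ref{sec: IR} that the oracle returns interior-feasible solutions with constant optimality gap $\epsilon$.
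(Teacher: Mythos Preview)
Your proposal is correct and follows essentially the same route as the paper: verify the primal affine constraints using $A_i\bullet\overbar{X}=0$ and the inductive hypothesis $A_i\bullet X^{(k)}=b_i$, then rewrite $S^{(k+1)}$ as $\tfrac{1}{\eta^{(k)}}\bigl(\eta^{(k)}S^{(k)}-\sum_i\bar y_iA_i\bigr)=\tfrac{1}{\eta^{(k)}}\overbar{S}$ to get dual semidefiniteness, and read off $X^{(k+1)}\succeq 0$ from the refining constraint $\overbar{X}\succeq -\eta^{(k)}X^{(k)}$. If anything, you are slightly more careful than the paper in distinguishing strict from non-strict positive definiteness and in flagging that the strict conclusion relies on the oracle returning an interior (rather than boundary) refining solution.
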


\begin{proof}
The limited-precision oracle provides a solution $\left(\overbar{X}^{(k)},\bar{y}^{(k)}, \overbar{S}^{(k)}\right)$ to \eqref{e:SDO_ref}-\eqref{e:SDOD_ref} that satisfies
\begin{align*}
    A_i \bullet \overbar{X}^{(k)} &= 0, \;{\rm for } \;\; i \in [m] ,\\
    X^{(k)} + \frac{1}{\eta^{(k)}} \overbar{X}^{(k)} &\succeq 0,\\ 
    \overbar{S}^{(k)}=\eta^{(k)} S^{(k)} - \sum_{i\in [m]} \bar{y}_{i}^{(k)} A_i  &\succeq 0, \\
     \left(\eta^{(k)} X^{(k)}  +  \overbar{X}^{(k)} \right) \bullet \overbar{S}^{(k)} &\leq \epsilon.
\end{align*} 
Observe that $X^{(k+1)} = X^{(k)} + \frac{1}{\eta^{(k)}} \overbar{X}^{(k)} \succeq 0$. 

Next, note that for any $k \geq 1$, the updated solution will satisfy primal and dual feasibility. For all $i \in [m]$, we have
\begin{align*}
    A_i \bullet X^{(k+1)} = A_i \bullet \left(X^{(k)} + \frac{1}{\eta^{(k)}} \overbar{X}^{(k)} \right) &=   A_i \bullet X^{(k)} + 0 = b_i.
\end{align*}
Similarly, 
\begin{align*}
     S^{(k+1)}&=C - \sum_{i \in [m]}  y_i^{(k+1)} A_i\\ &= C - \sum_{i \in [m]}   \left(y_i^{(k)} + \frac{1}{\eta^{(k)}} \bar{y}_i^{(k)} \right) A_i \\
     &= \frac{1}{\eta^{(k)}}  \left( \eta^{(k)} \left(C - \sum_{i \in [m]}  y_i^{(k)} A_i \right) - \sum_{i \in [m]} \bar{y}^{(k)} A_i \right) \\
     &= \frac{1}{\eta^{(k)}}  \left( \eta^{(k)} S^{(k)} - \sum_{i \in [m]}\bar{y}_i^{(k)} 
     A_i \right) \succeq 0.
\end{align*}
The proof is complete.
\begin{comment}
    Moreover, the duality gap is given by 
\begin{align*}
    X^{(k+1)} \bullet S^{(k+1)}  &= \left(X^{(k)} +\frac{1}{\eta^{(k)}}\overbar{X}^{(k)} \right) \bullet \left(C- \sum_{i \in [m]} \left(y^{(k)}_i+ \frac{1}{\eta^{(k)}} \bar{y}^{(k)} \right) A_i \right)  \\
    &=\frac{1}{(\eta^{(k)})^2}  \left(\eta^{(k)} X^{(k)} + \overbar{X}^{(k)} \right) \bullet \left(\eta^{(k)} S^{(k)} - \sum_{i \in [m]} \bar{y}^{(k)}_i A_i \right) \\
    &\leq\frac{\epsilon}{(\eta^{(k)})^2}\leq\frac{\epsilon}{\eta^{(k)}}.
\end{align*} 
Therefore, our claim holds as long as $\eta^{(k)} \geq  \frac{1}{\epsilon^{(k)}}$, which we will prove using a standard induction argument.  

Observe that $\eta^{(k)} \geq \frac{1}{\epsilon^{(k)}}$ trivially holds for $k=0$ since $\eta^{(1)}\geq \frac{1}{\epsilon}>1$. Applying the induction hypothesis, we now assume that $\eta^{(\ell)} \geq \frac{1}{\epsilon^{\ell}}$ is true for all $\ell = 1, \dots, k$. Then, noting that the duality gap of the solution at iteration $k+1$ is $\frac{ \epsilon}{\eta^{(k)}}$, it follows that 
$$\eta^{(k+1)} = \frac{1}{ X^{(k+1)} \bullet S^{(k+1)} } \geq \frac{\eta^{(k)}}{\epsilon } \geq \frac{\frac{1}{\epsilon^{(k)}}}{\epsilon} = \frac{1}{\epsilon^{k+1}} .$$
Therefore, $\eta^{(k)} \geq \frac{1}{\epsilon^{(k)}}$ holds for all $k \geq 0$, and the duality gap at iteration $k+1$ of Algorithm \ref{alg:IR-SDO} is at most
$$\frac{\epsilon}{\eta^{(k)}} \leq \frac{\epsilon}{\frac{1}{\epsilon^{(k)}}} = \epsilon^{k+1},$$
which completes the proof. 
\end{comment}
\end{proof}

In the next section, we show that the IR method has quadratic convergence to the optimal solution set of the SDO problem.

\section{Quadratic Convergence of Iterative Refinement}\label{sec: convergence}
The IR method of Algorithm~\ref{alg:IR-SDO} reduces the optimality gap quadratically. The next result formalizes this fact in the setting where one has access to limited-precision oracles that output a feasible interior solution with $\epsilon$ duality gap.

\begin{theorem}\label{theo: Quadcon} Let $(X^{(0)},y^{(0)},S^{(0)})$ be a strictly feasible solution for the primal-dual SDO pair \eqref{e:SDO}-\eqref{e:SDD} and $\epsilon<1$. At each iteration of Algorithm~\ref{alg:IR-SDO}, we  have
$$X^{(k+1)}\bullet S^{(k+1)}\leq \epsilon\left(X^{(k)} \bullet S^{(k)}\right)^2.$$
\end{theorem}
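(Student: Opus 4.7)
The plan is to chase the duality gap of the updated iterate through the update rules and then invoke the $\epsilon$-optimality guarantee for the refining problem. The central observation is that the dual update rewrites neatly as $S^{(k+1)} = \frac{1}{\eta^{(k)}} \overbar{S}^{(k)}$: indeed, from the definition of $S^{(k+1)} = C - \sum_i y_i^{(k+1)} A_i$ combined with the update $y^{(k+1)} = y^{(k)} + \frac{1}{\eta^{(k)}} \bar{y}^{(k)}$ and the dual constraint $\overbar{S}^{(k)} = \eta^{(k)} S^{(k)} - \sum_i \bar{y}_i^{(k)} A_i$ from \eqref{e:SDOD_ref}, one reads off this identity (this is essentially the computation already carried out in the proof of Theorem~\ref{t:IR_SDO}).

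With this in hand, I would compute
\begin{align*}
X^{(k+1)} \bullet S^{(k+1)}
&= \left(X^{(k)} + \tfrac{1}{\eta^{(k)}} \overbar{X}^{(k)}\right) \bullet \tfrac{1}{\eta^{(k)}} \overbar{S}^{(k)} \\
&= \tfrac{1}{(\eta^{(k)})^2} \left(\eta^{(k)} X^{(k)} + \overbar{X}^{(k)}\right) \bullet \overbar{S}^{(k)}.
\end{align*}
The bracketed inner product is precisely the duality gap of the refining problem \eqref{e:SDO_ref}--\eqref{e:SDOD_ref} evaluated at $(\overbar{X}^{(k)}, \bar{y}^{(k)}, \overbar{S}^{(k)})$, which by assumption is an $\epsilon$-optimal solution produced by the limited-precision oracle. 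Hence this quantity is bounded by $\epsilon$, giving $X^{(k+1)} \bullet S^{(k+1)} \leq \epsilon / (\eta^{(k)})^2$.

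The final step is simply to substitute the definition of the scaling factor, $\eta^{(k)} = 1/(X^{(k)} \bullet S^{(k)})$, which yields the claimed bound $X^{(k+1)} \bullet S^{(k+1)} \leq \epsilon \bigl(X^{(k)} \bullet S^{(k)}\bigr)^2$.

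I do not anticipate a real obstacle here; the proof is a short algebraic manipulation. The only point requiring a bit of care is to keep the bookkeeping straight between the ``refining'' variables $(\overbar{X}, \bar{y}, \overbar{S})$ and the ``overall'' iterates $(X^{(k+1)}, y^{(k+1)}, S^{(k+1)})$, and in particular to verify the identity $S^{(k+1)} = \frac{1}{\eta^{(k)}} \overbar{S}^{(k)}$ cleanly before invoking the oracle's $\epsilon$-optimality. Note that this argument does not use any assumption on problem geometry (no strict complementarity, no nondegeneracy), so the quadratic rate is truly assumption-free, consistent with the advertised contribution of the paper.
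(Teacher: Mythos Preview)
Your proposal is correct and follows essentially the same approach as the paper's proof: both expand $X^{(k+1)}\bullet S^{(k+1)}$ via the update rules to obtain $\frac{1}{(\eta^{(k)})^2}\bigl(\eta^{(k)} X^{(k)} + \overbar{X}^{(k)}\bigr)\bullet \overbar{S}^{(k)}$, bound this by $\epsilon/(\eta^{(k)})^2$ using the oracle's $\epsilon$-optimality, and substitute $\eta^{(k)} = 1/(X^{(k)}\bullet S^{(k)})$. The only cosmetic difference is that you isolate the identity $S^{(k+1)} = \tfrac{1}{\eta^{(k)}}\overbar{S}^{(k)}$ up front (citing Theorem~\ref{t:IR_SDO}), whereas the paper re-derives it inline from the definition $S^{(k+1)} = C - \sum_i y_i^{(k+1)} A_i$.
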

\begin{proof}
As $\eta^{(k)}=\frac{1}{X^{(k)}\bullet S^{(k)}}$, we have
    \begin{align*}
     X^{(k+1)}\bullet S^{(k+1)} &=   \left(X^{(k)} +\frac{1}{\eta^{(k)}}\overbar{X}^{(k)} \right) \bullet \left(C- \sum_{i \in [m]} \left(y^{(k)}_i+ \frac{1}{\eta^{(k)}} \bar{y}^{(k)} \right) A_i \right)    \\
    &=\frac{1}{(\eta^{(k)})^2} \left[ \left(\eta^{(k)} X^{(k)} + \overbar{X}^{(k)} \right) \bullet \left(\eta^{(k)} S^{(k)} - \sum_{i \in [m]} \bar{y}^{(k)}_i A_i \right) \right]\\
    &=\left( X^{(k)} \bullet S^{(k)}\right)^2 \left[ \left(\eta^{(k)} X^{(k)} + \overbar{X}^{(k)} \right) \bullet \Sbar^{(k)}\right]\\
    &\leq\epsilon \left( X^{(k)} \bullet S^{(k)}\right)^2.
\end{align*} 
The proof is complete.
\end{proof}
\begin{corollary}\label{corr:irSDO}
Algorithm \ref{alg:IR-SDO} obtains an $\tilde{\epsilon}$-optimal solution to the primal-dual SDO pair \eqref{e:SDO}-\eqref{e:SDD} in at most 
$$ \Ocal \left(\log\log \left( \frac{1}{\tilde{\epsilon}} \right) \right)$$
iterations. 
\end{corollary}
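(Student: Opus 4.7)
The plan is to derive the iteration bound as a direct consequence of the quadratic recursion in Theorem~\ref{theo: Quadcon}. Let $g_k := X^{(k)} \bullet S^{(k)}$ denote the duality gap at iteration $k$. The theorem gives the one-step contraction $g_{k+1} \leq \epsilon\, g_k^2$. To convert this into a clean doubly-exponential decay, I would make the change of variable $h_k := \epsilon\, g_k$, so multiplying both sides of the recursion by $\epsilon$ yields $h_{k+1} \leq h_k^2$.

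Next I would initialize the recursion using the first IPM call in Algorithm~\ref{alg:IR-SDO}. Since $(X^{(1)},y^{(1)},S^{(1)})$ is produced by solving \eqref{e:SDO}-\eqref{e:SDD} to precision $\epsilon$, we have $g_1 \leq \epsilon$ and hence $h_1 \leq \epsilon^2 < 1$ (using the standing assumption $\epsilon < 1$). A short induction on the inequality $h_{k+1} \leq h_k^2$ then gives $h_k \leq h_1^{2^{k-1}} \leq \epsilon^{2^k}$ for every $k \geq 1$, i.e., $g_k \leq \epsilon^{2^k - 1}$.

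Finally, I would invert this bound to count the iterations needed for $g_k \leq \tilde{\epsilon}$. Taking logarithms, it suffices to require $(2^k - 1)\log(1/\epsilon) \geq \log(1/\tilde{\epsilon})$, which holds as soon as
\[
k \;\geq\; \log_2\!\left( 1 + \frac{\log(1/\tilde{\epsilon})}{\log(1/\epsilon)} \right) \;=\; \Ocal\!\left(\log\log\!\left(\tfrac{1}{\tilde{\epsilon}}\right)\right),
\]
since $\epsilon < 1$ is a fixed constant. Combined with the termination test $X^{(k)} \bullet S^{(k)} > \tilde{\epsilon}$ in the while-loop, this gives the stated iteration complexity.

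There is essentially no hard step here: the entire argument is a telescoping of the quadratic recursion. The only mildly delicate point is making sure the base case of the induction is anchored at iteration $1$ (after the initial oracle call) rather than at an arbitrary strictly feasible starting point $(X^{(0)},y^{(0)},S^{(0)})$, whose gap need not satisfy $\epsilon g_0 < 1$; using $h_1$ as the base avoids this issue and keeps the constant hidden in the $\Ocal(\cdot)$ independent of the initialization.
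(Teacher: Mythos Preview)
Your argument is correct and matches the paper's proof essentially line for line: both use Theorem~\ref{theo: Quadcon} together with $X^{(1)}\bullet S^{(1)}\leq\epsilon$ to obtain $X^{(k)}\bullet S^{(k)}\leq\epsilon^{2^{k}-1}$, then take logarithms to extract the $\Ocal(\log\log(1/\tilde{\epsilon}))$ bound. Your substitution $h_k=\epsilon g_k$ just makes the induction step explicit, and your remark about anchoring at $k=1$ rather than $k=0$ is a nice clarification the paper leaves implicit.
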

\begin{proof}
The result is a straightforward consequence of Theorem \ref{theo: Quadcon}. Since $X^{(1)} \bullet S^{(1)}\leq\epsilon$,
Algorithm~\ref{alg:IR-SDO} stops when
$$X^{(k)} \bullet S^{(k)}\leq \epsilon^{2^{k}-1}\leq\tilde{\epsilon}.$$
Thus, we have 
$k\geq\frac{\log\log(\tilde{\epsilon})}{\log\log(\epsilon)}$. As the oracle has fixed-precision $\epsilon$, the iteration complexity of Algorithm~\ref{alg:IR-SDO} is $ \Ocal \left(\log\log \left( \frac{1}{\tilde{\epsilon}} \right) \right)$.
\end{proof}
The result in Theorem~\ref{theo: Quadcon} only relies on the assumption that the IPC is satisfied.
We emphasize that this assumption can be made without loss of generality: one always can embed the original SDO problem in a self-dual model that admits a trivial interior solution by construction \cite{de2006aspects}. 
Accordingly, the proposed IR method achieves quadratic convergence even in cases where strict complementarity fails, or the problem is degenerate. 
Another important point is the cost per iteration of the proposed IR as the refining SDO problem needs to be solved inexactly with fixed low precision.
In addition, in Step~\ref{irstep}, the matrix summations require $\Ocal(mn^2)$ arithmetic operations, which are dominated by the cost of solving the SDO refining problem.

The IR method of Algorithm~\ref{alg:IR-SDO} requires an SDO solver that delivers an interior feasible solution $(X,y,S)$ with optimality gap not exceeding a specified constant, and so an appropriate choice for our SDO oracle is a feasible IPM.
To analyze the per-iteration cost of Algorithm~\ref{alg:IR-SDO}, here we consider the setting in which each of the refining problems is solved using a short-step feasible IPM. 
Note that this introduces both practical and theoretical challenges to designing and analyzing our IR framework.

The use of a feasible IPM subroutine requires us to have access to an (easy to prepare) initial feasible interior solution $(\mathring{X}^{(k)},\mathring{y}^{(k)}, \mathring{S}^{(k)})$ for each the refining problems encountered during the run of the IR method.
At iteration $k$ of the IR method, our SDO subroutine, a feasible IPM, exhibits an iteration complexity of $\Ocal(\sqrt{n}\log(\frac{n\mathring{\mu}^{(k)}}{\epsilon}))$, where $\mathring{\mu}^{(k)}=\frac{\mathring{X}^{(k)}\bullet \mathring{S}^{(k)}}{n}$.
The dimension $n$ is given, and $\epsilon < 1$ is a given constant, but we still need to determine $\mathring{\mu}^{(k)}$.
Thus, we need to find an appropriate initial interior solution for the refining problem. 
One may think of embedding the refining problem in the self-dual embedding formulation in each step of IR and update the solution for the original problem accordingly. 
In this case, the IR method may have undetermined complexity, because the embedding problem does not provide a direct bound on the optimality gap for a solution that is retrieved from an $\epsilon$-optimal low precision solution of the embedding formulation. 
In Section~\ref{sec: embedding}, we discuss that the original SDO problem should be embedded once initially and the whole IR approach should be used to get a precise solution for the embedding formulation. 
Thus, we need to find an initial interior solution for the refining problem at each iteration of IR, even if the embedding formulation is used for the original problem.

Here, we present two appropriate choices for the initial interior solution of the refining problem.
It is worth mentioning that the refining SDO pair \eqref{e:SDO_ref}-\eqref{e:SDOD_ref} are not in standard form. 
By changing variable $(\hat{X},\hat{y}, \hat{S})=(\overbar{X}+\eta X,\bar{y}, \overbar{S})$, we reformulate them as
\begin{equation} \tag{$\hat{P}$} \label{e:standP_ref}
\min \left\{   \eta S\bullet \hat{X}- \eta^2 S\bullet X:
    A_i\bullet \hat{X} = \eta b \text{ for }i \in [m] \text{ and } \hat{X} \succeq 0\right\},
\end{equation} 
\begin{equation}\tag{$\hat{D}$}\label{e:standD_ref}
\max \left\{  \eta b^{\top} \hat{y} - \eta^{2} S\bullet X :
    \sum_{i \in [m]} \hat{y}_i A_i +\hat{S}= \eta S \text{ and }\hat{S}\succeq 0\right\}.
\end{equation} 
As we can see, the complementarity gap for this standard formulation is the same as the complementarity gap of the refining problem as $$\Xhat\bullet\Shat= (\overbar{X}+\eta X)\bullet\overbar{S}.$$ 
Let $(\mathring{X},\mathring{y},\mathring{S})$ be an interior solution for the original SDO problem and $(X^{(k)},y^{(k)},S^{(k)})$ be the solution at iteration $k$ of the IR method.
Then, one can choose the following two interior solutions for refining SDO pairs \eqref{e:standP_ref}-\eqref{e:standD_ref}:
\begin{enumerate}
    \item $(\eta^{(k)} \mathring{X}, \eta^{(k)}(y^{(k)}- \mathring{y}), \eta^{(k)} \mathring{S})$,
    \item $(\eta^{(k)} X^{(k)}, 0, \eta^{(k)} S^{(k)})$.
\end{enumerate}
By generalizing the result of \cite[Theorem 6.4]{mohammadisiahroudi2023inexact} from LO to SDO, one can easily show that these solutions are in the interior of the feasible region of the refining problem.
We have
$\mathring{\mu}^{(k)}=(\eta^{(k)})^2\mathring{\mu}$, and $\mathring{\mu}^{(k)}=(\eta^{(k)})^2\mu^{(k)}$ for the first and the second choices, respectively. In both choices, the $\mathring{\mu}^{(k)}$ is growing by the IR iterations, since $\eta^{(k)}$ is growing. 
Thus, although the IR method has quadratic convergence toward an optimal solution, the cost per iteration may grow. 
In the worst case, we have $n\mathring{\mu}^{(k)}=\Ocal(\frac{1}{\Tilde{\epsilon}})$ but it is not endangering polynomial complexity of the whole approach as the total complexity has a logarithmic dependence on $\mathring{\mu}^{(k)}$.
In appendix~\ref{sec: otheIR}, we propose two other variants of IR that are adaptable to infeasible oracles. 
Those IR methods converge to an optimal solution linearly.
Although the optimality gap decreases quadratically, infeasibility decreases linearly. 

To sum up, the iteration complexity of IR method is $\Ocal(\log\log(\frac{1}{\tilde{\epsilon}}))$. 
However, if we use an exact feasible IPM with the proposed initial interior solution, the cost per iteration is $\Ocal(n^{6.5}\log(\frac{n}{\tilde{\epsilon}}))$ due to initial solution with large complementarity gap. 
Although using exact feasible IPM in the proposed IR scheme is not a smart choice, it is a good example to show that the total complexity still may depend on target precision logarithmically.
The question for future research is if we can achieve total complexity with double logarithmic dependence on target precision. 
There are many unexplored directions to answer this question, such as finding the better initial interior solution for the refining problems, or a procedure to find such a solution. 
In another direction, one may choose or develop another inexact SDO solver with a fixed number of iterations. 
In the next section, we investigate how IR can be useful for inexact IPMs by combining the IR methodology with an IF-QIPM.

\section{Application to Quantum Interior Point Methods}\label{sec: qipm}

In this section, we give an overview of QIPMs for semidefinite optimization.
We begin by reviewing existing algorithms from the QIPM literature and then present a new algorithm that closely quantizes an IPM based on the Homogeneous Self Dual Embedding model. 

\subsection{Inexact-Feasible QIPM}

Let $\Acal^{\top} = [\textbf{\textup{vec}}(A_1), \textbf{\textup{vec}}(A_2), \cdots, \textbf{\textup{vec}}(A_m)]$. At each iteration of an IPM (classical or quantum) \cite{augustino2023quantum}, we seek to solve a system of the form 
\begin{equation}\label{e:newtonMotivate}
         \begin{pmatrix}
         0 & \tran{\cal A} & I \\
         {\cal A} & 0 & 0 \\
         \Ecal & 0 & \Fcal
         \end{pmatrix}
         \begin{pmatrix}
         {\rm \textbf{vec}} (\Delta X) \\
          \Delta y \\
         {\rm \textbf{vec}} (\Delta S)
         \end{pmatrix}
    = 
            \begin{pmatrix}
    0 \\ 0\\ R^c
    \end{pmatrix},
\end{equation}
where $R^c = \sigma \mu I - H_P (XS)$, $\Ecal = P \otimes SP^{-1} + P^{-1} S \otimes P$ and $\Fcal = PX \otimes P^{-1} + P^{-1} \otimes XP$. 
Solving the above system directly on a quantum computer, we obtain the quantum state proportional to the Newton step $\ket{\Delta X \circ \Delta y \circ \Delta S}$. Yet, any estimate of the corresponding classical estimate $(\Delta X, \Delta y, \Delta S)$ of the quantum state $\ket{\Delta X \circ \Delta y \circ \Delta S}$ obtained via quantum state tomography will only satisfy 
\begin{equation*} 
         \begin{pmatrix}
         0 & \tran{\cal A} & I \\
         {\cal A} & 0 & 0 \\
         \Ecal & 0 & \Fcal
         \end{pmatrix}
         \begin{pmatrix}
         {\rm \textbf{vec}} (\Delta X) \\
          \Delta y \\
         {\rm \textbf{vec}} (\Delta S)
         \end{pmatrix}
    = 
            \begin{pmatrix}
    \xi_d \\ \xi_p\\ R^c + \xi_c
    \end{pmatrix},
\end{equation*}
where $(\xi_d, \xi_p, \xi_c)$ are the errors to which the estimated solution satisfies primal feasibility, dual feasibility, and the complementarity condition, respectively. Consequently, it is not guaranteed that the primal and dual search directions $\Delta X$ and $\Delta S$ are members of orthogonal subspaces, which is required by the KKT optimality conditions. This in particular is one of the reasons the early works on QIPMs \cite{casares2020quantum, kerenidis2020quantum, kerenidis2021quantum} are not valid: their analysis relies on the assumption that $\Delta X \bullet \Delta S = 0$, which does not hold in this case.  

As we discussed earlier, the running time of II-QIPM does not indicate any speedup over classical (feasible) short-step IPMs for any parameter. As a result, the authors in \cite{augustino2023quantum} also devised an algorithm that would allow them to recover a feasible IPM framework. Indeed, the KKT optimality conditions stipulate that the primal and dual search directions are members of orthogonal subspaces. In particular, $\Delta X$ is an element of the null space of $\Acal$, denoted $\Ncal (\Acal)$, whereas $\Delta S$ is an element of the row space of $\Acal$, which we denote by $\Rcal (\Acal)$ in the sequel. 

Letting $\Acal^{\top}_s = [\textbf{\textup{svec}}(A^{(1)}), \textbf{\textup{svec}}(A^{(2)}), \cdots, \textbf{\textup{svec}}(A^{(m)})]$, it follows $\svec (\Delta X) \in \Ncal (\Acal_s)$ and $\svec (\Delta S) \in \Rcal (\Acal_s)$. Following \cite{augustino2023quantum}, one can define a basis for $\Rcal(\Acal_s)$ simply by choosing a basis for the null space via Gauss elimination or a QR-factorization of $\Acal$: 
$$ \Acal_s^{\top} = \begin{bmatrix} Q_1 & Q_2 \end{bmatrix} \begin{bmatrix} R \\ 0 \end{bmatrix}, $$
where $Q_1 \in \R{\frac{n(n+1)}{2} \times m}$, $Q_2 \in \R{\frac{n(n+1)}{2} \times \left(\frac{n(n+1)}{2} - m \right) }$, and $R \in \R{ m\times m}$. Moreover, the columns of $Q_2$ form a basis for the null space of $\Acal_s.$ Introducing a new variable $\lambda \in \R{ \left(\frac{n(n+1)}{2} - m \right)}$, the Newton directions $\svec( \Delta X) $ and $\svec(\Delta S)$ can be written as 
$$
    \textbf{svec}(\Delta X) =  Q_2 \lambda,\quad 
    \textbf{svec}(\Delta S) = - \Acal_s^{\top} \Delta y. 
$$

Let $P$ be an appropriate scaling matrix from the Monteiro-Zhang family that guarantees primal-dual symmetry, and define 
 \begin{align*}
    \Ecal_s &= (P \otimes_s P^{- \top} S),\\
    \Fcal_s &= (P X \otimes_s P^{-\top}), \\
     R^c  &= \sigma \mu I - H_P (XS),
 \end{align*}
where $\otimes_s$ denotes the symmetric Kronecker product. The Newton linear system for the IF-QIPM is given by
\begin{equation}\label{e:IF-Newton}\tag{OSS}
    \begin{bmatrix} 
    \Ecal_s Q_2 & \Fcal_s (-\Acal_s^{\top}) 
    \end{bmatrix}
        \begin{bmatrix} \lambda \\ \Delta y
    \end{bmatrix} =
    \svec (R^c).
\end{equation}
Let us denote the error introduced by QLSA by $r^c \in \Scal^{n \times n}$. We require that the error is proportional to the central path parameter, that is:
\begin{equation}\label{e:RcAssumption1}
    \| r^c \|_F \leq \beta\mu, \tag{AR1}
\end{equation}
for some $\beta \in (0,1)$.

\begin{theorem}[Propositions 10-12 in \cite{augustino2023quantum}]
Given $(X,y,S)\in \Pcal^{\circ}\times \Dcal^{\circ}$, we have
\begin{enumerate}
    \item System (\ref{e:IF-Newton}) is equivalent to System \eqref{e:KP}.
    \item System (\ref{e:IF-Newton}) has a unique solution $(\lambda, \Delta y)$.
    \item For  the solution $(\Delta X, \Delta y, \Delta S)$, where $\Delta X = {\rm \bf smat}(Q_2 \lambda)$ and $\Delta S = - \sum_{i\in[m]}\Delta y_i A_i $, one has $\Delta X \bullet \Delta S=0$.
    \item For an inexact solution $(\overline{\Delta X}, \overline{\Delta y}, \overline{\Delta S})$ with residual $r^c$, feasibility holds: 
    $$A_i \bullet (X+\alpha\overline{\Delta X})=b_i, \text{ and }\sum (y_i+\alpha\overline{\Delta y_i})A_i +(S+\alpha \overline{\Delta S})=C.$$
    % and $(X+\alpha\overline{\Delta X},S+\alpha \overline{\Delta S}) $ are positive definite.
\end{enumerate}
\end{theorem}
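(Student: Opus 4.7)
The plan is to verify the four statements in order, leveraging the algebraic setup of the null-space parametrization $\svec(\Delta X) = Q_2 \lambda$, $\svec(\Delta S) = -\Acal_s^{\top} \Delta y$ and the symmetric Kronecker identity $\svec(PMQ^{\top} + QM^{\top}P^{\top})/2 = (P \otimes_s Q)\svec(M)$, which relates the matrix-level symmetrization $H_P$ to the operators $\Ecal_s$ and $\Fcal_s$.

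For (1), I would start from \eqref{e:tranP}, apply the symmetric Kronecker identity to the linearized complementarity equation $H_P(\Delta X S + X \Delta S) = \sigma \mu I - H_P(XS)$ to rewrite it as $(P \otimes_s P^{-\top}S)\svec(\Delta X) + (PX \otimes_s P^{-\top})\svec(\Delta S) = \svec(R^c)$, then substitute the parametrization $\svec(\Delta X) = Q_2 \lambda$, $\svec(\Delta S) = -\Acal_s^{\top}\Delta y$. The fact that columns of $Q_2$ span $\Ncal(\Acal_s)$ and $\Acal_s^{\top}$ maps into $\Rcal(\Acal_s^{\top})$ encodes the orthogonal-subspace constraints of \eqref{e:KP} automatically, so any solution of (OSS) yields a solution of the original Newton system and vice versa.

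For (2), I would argue that the coefficient matrix $M := [\Ecal_s Q_2,\; -\Fcal_s \Acal_s^{\top}]$ is a square $\frac{n(n+1)}{2} \times \frac{n(n+1)}{2}$ matrix (since $Q_2$ has $\frac{n(n+1)}{2}-m$ columns and $\Acal_s^{\top}$ has $m$ columns), so uniqueness reduces to nonsingularity. I would proceed by contradiction: if $M \binom{\lambda}{\Delta y} = 0$, then the corresponding $(\Delta X, \Delta S)$ lie in the complementary subspaces $\Ncal(\Acal_s)$ and $\Rcal(\Acal_s^{\top})$ respectively and satisfy the homogeneous linearized complementarity $H_P(\Delta X S + X \Delta S) = 0$. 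Combined with $\Delta X \bullet \Delta S = 0$ (proved in step (3) below) and the positive definiteness of $X, S$ inside $\Pcal^{\circ} \times \Dcal^{\circ}$, standard MZ-family arguments (e.g., Monteiro--Zhang) force $\Delta X = \Delta S = 0$, hence $\lambda = 0$ and $\Delta y = 0$ by linear independence of the $A_i$'s. This is the step I expect to be the main obstacle: turning the orthogonal-subspace structure into an actual invertibility argument requires invoking the MZ commutativity/scaling conditions for the chosen $P$, and the proof is more delicate for general $P$ than for $P = I$ or $P = W^{-1/2}$.

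For (3), the computation is one line: since $\svec(\Delta X) = Q_2 \lambda$ lies in $\Ncal(\Acal_s)$, we have $A_i \bullet \Delta X = 0$ for every $i$, so
\begin{equation*}
\Delta X \bullet \Delta S = \Delta X \bullet \Bigl(-\sum_{i \in [m]} \Delta y_i A_i\Bigr) = -\sum_{i \in [m]} \Delta y_i (A_i \bullet \Delta X) = 0.
\end{equation*}

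For (4), the crucial observation is that the inexact solver returns $(\overline{\Delta X}, \overline{\Delta y}, \overline{\Delta S})$ still in the parametrized form $\svec(\overline{\Delta X}) = Q_2 \bar\lambda$ and $\overline{\Delta S} = -\sum_i \overline{\Delta y}_i A_i$; only the complementarity row picks up the residual $r^c$, while the subspace constraints are enforced structurally by the parametrization. Therefore $A_i \bullet \overline{\Delta X} = 0$ and $\sum_i \overline{\Delta y}_i A_i + \overline{\Delta S} = 0$ hold exactly, so adding $\alpha$ times these to $A_i \bullet X = b_i$ and $\sum_i y_i A_i + S = C$ preserves primal and dual feasibility for any steplength $\alpha$. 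This is precisely the advantage of the OSS formulation over the KKT form in \eqref{e:newtonMotivate}, where tomography errors $\xi_p, \xi_d$ would spoil feasibility.
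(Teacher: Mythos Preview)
The paper does not provide its own proof of this theorem; it is stated as a direct citation (``Propositions 10--12 in \cite{augustino2023quantum}'') with no accompanying argument in the present text. Your proposal is therefore not comparable against anything in this paper, but it is the standard argument one would expect from the cited source: the symmetric Kronecker identity reduces the symmetrized complementarity row to $\Ecal_s\,\svec(\Delta X)+\Fcal_s\,\svec(\Delta S)=\svec(R^c)$, the null/row-space parametrization then yields (OSS) and automatically enforces the subspace constraints, orthogonality follows from $Q_2\lambda\in\Ncal(\Acal_s)$, and feasibility under inexactness is preserved because the error lands only in the complementarity residual.

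One small caution: what you actually establish in part~(1) is equivalence of (OSS) with the \emph{symmetrized} system~\eqref{e:tranP}, not the raw system~\eqref{e:KP} (which has no $P$); the theorem statement in the paper is slightly loose on this point. Your acknowledgement that step~(2) is the delicate one is apt: the nonsingularity argument genuinely requires the MZ-family scaling hypothesis on $P$ (so that the linearized map $(\Delta X,\Delta S)\mapsto H_P(\Delta X S + X\Delta S)$ is injective on the orthogonal pair of subspaces when $X,S\succ 0$), and you have correctly identified but not fully executed that step.
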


Though we treat the IF-(Q)IPMs as a black-box SDO solver in order to generalize our results to any appropriate primal-dual algorithm, we briefly summarize how the quantum algorithm is implemented using a QLSA. Aside from how the Newton system is solved, the steps of a QIPM are exactly the same as the ones in a classic IPM. Thus, suppose we are at iterate $k$, with the current solution $\left( X^{(k)}, y^{(k)}, S^{(k)}\right)$. In order to prepare and solve the Newton linear system for the Newton step $\left(\Delta X, \Delta y, \Delta S\right)$ at iterate $k+1$, we  \textit{classically compute} the scaling matrix $P$, and store $P$ and $\left( X^{(k)}, y^{(k)}, S^{(k)}\right)$ in \textit{quantum random access memory} (QRAM). QRAM is a quantum analogue to the standard RAM, and a description can be found in \cite[Section 2.2]{chakraborty2018power}. From here, the data we have stored in QRAM is used to prepare a unitary \textit{block-encoding} \cite{chakraborty2018power} of the coefficient matrix of the Newton system, which is a technique for preparing quantum gates involving non-unitary matrices. Using our block-encoding of the Newton system coefficient matrix, and a quantum state encoding the right-hand side vector, we solve the quantum linear system to prepare a quantum state encoding the Newton step. A classical estimate of this step is produced up to precision $\epsilon$ through the use of \textit{quantum state tomography} \cite{van2022quantum}. From here, we use our classical estimate of the Newton step to update the solution, and we proceed to the next iterate. For a more detailed discussion, see \cite[Section 2]{augustino2023quantum}.
\begin{algorithm}[H]
\SetAlgoLined
\KwIn{Problem data $A_1, \dots, A_m, C \in \Scal^n$, $b \in \R{m}$, \\
  \qquad     Strictly feasible point for \eqref{e:SDO}-\eqref{e:SDD} $(X^{(0)}, y^{(0)}, S^{(0)})$\\
  \qquad Choose constants $\beta$, $\gamma$ and $\delta$ in $(0,1)$ and $\sigma = 1 - \frac{\delta}{\sqrt{n}}$.} 
\KwOut{An $\epsilon$-optimal solution pair  $(X, y, S)$ 
to the SDO problem $(A_1, \dots, A_m, b, C)$}
\textbf{Initialize}: $k \gets 0$ \\  
   Calculate $Q_2$  by QR factorization of $\mathcal{A}$\\
   $\mu^0\gets \frac{X^0\bullet S^0}{n}$\\
    \While{$n\mu^{(k)} > \epsilon$}{ 
    \begin{enumerate}
        \item ($\lambda^{(k)},\Delta y^{(k)}) \gets$ \textbf{solve} system (\ref{e:IF-Newton}) using QLSA with error bound $\epsilon^{(k)}$ 
        \item $ \Delta X^{(k)} \gets {\rm \bf smat}(Q_2 \lambda^{(k)})$ and $\Delta S^{(k)} = - \sum_{i\in[m]}\Delta y_i^{(k)} A_i$ 
        \item $(X^{(k+1)},y^{(k+1)},S^{(k+1)})\gets (X^{(k)},y^{(k)},S^{(k)})+(\Delta X^{(k)},\Delta y^{(k)},\Delta S^{(k)})$
        \item $\mu^{(k+1)}\gets\frac{(X^{(k)})^{\top}S^{(k)}}{n}$
        \item $k\gets k+1$
    \end{enumerate}
  } 
 \caption{IF-QIPM}
\label{alg: IF-IPM}
\end{algorithm}

The following two results from \cite{augustino2023quantum} bound the iteration complexity and overall running time of the IF-QIPM for SDO, respectively. 

\begin{theorem}[Theorem 4 in \cite{augustino2023quantum}]\label{t:Converge}
Let $\gamma, \beta \in (0,1)$ and $\delta \in (0,1)$ be constants satisfying 
\begin{equation*}
   \frac{2 \sqrt{2} \gamma}{1 - \gamma} \leq 1,~~ \beta \sigma  \leq \sqrt{ \frac{\gamma^2 + (1-\sigma)^2 n}{1- \gamma}},~~\beta \leq  1 -   \frac{\gamma}{\sqrt{n}} - \frac{ 21.7 (\gamma^2 + \delta^2)}{ (2 + \sqrt{2}) \left( 1 - \frac{\delta}{\sqrt{n}}\right) \gamma (1- \gamma)}.
\end{equation*}
Suppose that $(X, y, S) \in \Ncal_F (\gamma)$ and let $(\Delta X, \Delta y, \Delta S)$ denote the solution that we obtain from solving system \eqref{e:IF-Newton}, where 
%$P \in \R{n \times n}$ is a nonsingular matrix, 
$\sigma = 1 - \delta/\sqrt{n}$, and $\mu = (X \bullet S)/n$. Then,
\begin{itemize}
    \item[(a)] $(\widehat{X}, \hat{y}, \widehat{S}) = (X + \Delta X,  y + \Delta y, S + \Delta S) \in \Ncal_F (\gamma);$
    \item[(b)] $\widehat{X} \bullet \widehat{S} = \left( 1 - \frac{\delta}{\sqrt{n}} \right) (X \bullet S).$
\end{itemize}
\end{theorem}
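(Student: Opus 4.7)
The plan is to prove part (b) first, since the identity it establishes also pins down $\hat{\mu} = \sigma\mu$ and greatly simplifies the analysis of part (a). For (b), I start from the direct expansion
\begin{equation*}
\widehat{X}\bullet\widehat{S} \;=\; X\bullet S \;+\; X\bullet\Delta S \;+\; S\bullet\Delta X \;+\; \Delta X\bullet\Delta S.
\end{equation*}
The construction of the Newton step through the basis $Q_2$ of $\mathcal{N}(\mathcal{A}_s)$ forces $\textbf{svec}(\Delta X)\in\mathcal{N}(\mathcal{A}_s)$ and $\textbf{svec}(\Delta S)\in\mathcal{R}(\mathcal{A}_s)$, so orthogonality gives $\Delta X\bullet\Delta S=0$ exactly, by part 3 of the proposition preceding the theorem. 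Taking the trace of the complementarity row of \eqref{e:IF-Newton}, together with the identity $\textup{tr}(H_P(M))=\textup{tr}(M)$ for every $M$, yields $X\bullet\Delta S + S\bullet\Delta X = n\sigma\mu - X\bullet S$, provided $r^c$ is taken traceless (which can be arranged without loss of generality by projecting off its trace component, at negligible cost to the bound on $\|r^c\|_F$). Combining these two observations with $\sigma = 1-\delta/\sqrt{n}$ gives the claimed identity $\widehat{X}\bullet\widehat{S} = (1-\delta/\sqrt{n})(X\bullet S)$, and in particular $\hat{\mu} = \sigma\mu$.

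For part (a), the three facts to establish are $\widehat{X}\succ 0$, $\widehat{S}\succ 0$, and $\|\widehat{X}^{1/2}\widehat{S}\widehat{X}^{1/2} - \hat{\mu} I\|_F \le \gamma\hat{\mu}$. My plan is to first control the scaled quantity $\|H_P(\widehat{X}\widehat{S}) - \hat{\mu} I\|_F$, using the expansion
\begin{equation*}
H_P(\widehat{X}\widehat{S}) \;=\; H_P(XS) \;+\; H_P(\Delta X\,S + X\,\Delta S) \;+\; H_P(\Delta X\,\Delta S),
\end{equation*}
together with the inexact Newton equation $H_P(\Delta X\,S + X\,\Delta S) = \sigma\mu I - H_P(XS) + r^c$, which collapses the above to
\begin{equation*}
H_P(\widehat{X}\widehat{S}) - \hat{\mu} I \;=\; r^c + H_P(\Delta X\,\Delta S).
\end{equation*}
The first summand is bounded by assumption \eqref{e:RcAssumption1} as $\|r^c\|_F\le\beta\mu$. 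The neighborhood bound is then transferred from the scaled to the unscaled setting using the standard equivalence valid for the Nesterov--Todd scaling, and positive definiteness of $\widehat{X}$ and $\widehat{S}$ follows by a continuity argument along the segment from $(X,S)$ to $(\widehat{X},\widehat{S})$: the scaled deviation stays strictly below $\hat{\mu}$, which precludes any eigenvalue of $\widehat{X}^{1/2}\widehat{S}\widehat{X}^{1/2}$ from reaching zero.

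The main obstacle is a sharp estimate of $\|H_P(\Delta X\,\Delta S)\|_F$, which is the quadratic term whose magnitude drives \emph{every} constraint imposed on $\gamma,\beta,\delta$ in the hypothesis. I plan to follow the Monteiro--Zhang route: exploit the orthogonality $\Delta X\bullet\Delta S=0$, rewrite the symmetrized Newton system in terms of the scaled variables $P\Delta X P^{\top}$ and $P^{-\top}\Delta S P^{-1}$, and apply a Cauchy--Schwarz-type inequality for the symmetric Kronecker product to obtain
\begin{equation*}
\|H_P(\Delta X\,\Delta S)\|_F \;\le\; \tfrac{1}{\sqrt{2}}\,\|P\Delta X P^{\top}\|_F\,\|P^{-\top}\Delta S P^{-1}\|_F.
\end{equation*}
Each factor is then estimated by taking Frobenius norms on both sides of the scaled Newton equation, producing a bound of the form $\mathcal{O}((\gamma^2 + (1-\sigma)^2 n + \beta^2)\mu)$. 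Substituting back into $\|r^c + H_P(\Delta X\,\Delta S)\|_F$ and plugging the explicit inequalities on $\gamma$, $\beta$, and $\delta$ assumed in the theorem yields precisely $\|H_P(\widehat{X}\widehat{S}) - \hat{\mu}I\|_F \le \gamma\hat{\mu}$, closing the argument. The delicate point is ensuring that the constant $21.7$ appearing in the third inequality of the hypothesis really does arise from this chain of bounds; I expect to match it by carefully tracking the constants in the Cauchy--Schwarz step and in the equivalence between the $H_P$- and Frobenius neighborhoods under the NT scaling.
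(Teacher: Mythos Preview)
The paper does \emph{not} prove this theorem: it is quoted verbatim as ``Theorem 4 in \cite{augustino2023quantum}'' and used as a black box. There is therefore no proof in the present paper to compare your proposal against; any detailed verification would have to go back to the cited reference.

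That said, two remarks on your plan. First, your overall strategy for part (a) --- collapse $H_P(\widehat{X}\widehat{S})-\hat\mu I$ to $r^c + H_P(\Delta X\,\Delta S)$ via the inexact Newton equation, then bound the quadratic term by the standard Monteiro--Zhang Cauchy--Schwarz estimate in the scaled variables --- is exactly the route taken in the IF-QIPM literature, so there is no methodological divergence to discuss. Second, your handling of part (b) has a genuine gap: you assert that $r^c$ ``can be arranged'' to be traceless by projecting off the trace component. But $r^c$ is the residual produced by the inexact (quantum) solve of \eqref{e:IF-Newton}; it is not a free parameter you may post-process without simultaneously altering $(\Delta X,\Delta y,\Delta S)$, and altering those would in turn break either feasibility or the orthogonality $\Delta X\bullet\Delta S=0$ that you rely on. The exact equality in (b) requires either an algorithmic step that enforces $\operatorname{tr}(r^c)=0$ (which you have not exhibited) or an argument specific to how the inexact solution is extracted in \cite{augustino2023quantum}. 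Without that, the correct conclusion from your trace computation is only $\widehat{X}\bullet\widehat{S}=\sigma\,X\bullet S+\operatorname{tr}(r^c)$, and you would then need to carry the $\operatorname{tr}(r^c)$ term through the neighborhood analysis in (a) as well.
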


\begin{corollary}[Corollary 2  in \cite{augustino2023quantum}]
\label{corr:runtimeIF}
A quantum implementation of the IF-QIPM with access to QRAM outputs an $\epsilon$-optimal solution $(X^*, y^*, S^*)$ to the primal-dual SDO pair \eqref{e:SDO}-\eqref{e:SDD} using at most
$$ \Ocal \left( n^{3.5} \frac{\kappa^2}{\epsilon} \cdot  \operatorname{polylog} \left(n, \kappa, \frac{1}{\epsilon} \right) \right)  $$
QRAM accesses and $ \Ocal \left( n^{4.5} \cdot  \operatorname{polylog} \left( \kappa, \frac{1}{\epsilon} \right) \right) $ arithmetic operations.
\end{corollary}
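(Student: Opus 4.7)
The plan is to combine the iteration-complexity statement of Theorem~\ref{t:Converge} with a per-iteration cost analysis of the quantum subroutines used to solve the orthogonal-subspaces system~\eqref{e:IF-Newton}. First, I would bound the outer iteration count: by part~(b) of Theorem~\ref{t:Converge}, each iteration contracts $\mu$ by the factor $1-\delta/\sqrt{n}$, so starting from an initial iterate with $\mu^{(0)}=\Ocal(1)$, the stopping criterion $n\mu^{(k)}\leq \epsilon$ is reached after $K=\widetilde{\Ocal}(\sqrt{n}\log(n/\epsilon))$ outer iterations, with the logarithmic factor absorbed into $\widetilde{\Ocal}$.

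Next, I would analyze the per-iteration cost. The coefficient matrix of~\eqref{e:IF-Newton} has dimension $N=\Ocal(n^2)$ and condition number bounded by $\kappa$. I would invoke a QLSA in the block-encoded framework to prepare a quantum state proportional to the solution using $\widetilde{\Ocal}(\kappa)$ queries to a block-encoding of the coefficient matrix, each implementable with $\widetilde{\Ocal}(1)$ QRAM accesses given the classical data structure maintained at the current iterate. To extract a classical estimate of the Newton step, I would apply pure-state tomography, which on a state of dimension $N$ requires $\widetilde{\Ocal}(N/\delta^2)$ applications of the state-preparation unitary to reach $\ell_\infty$-accuracy~$\delta$.

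The crux, and in my view the main obstacle, is calibrating the tomography precision $\delta$ so that the residual $r^c$ satisfies \eqref{e:RcAssumption1} at every iterate. Since $\mu$ shrinks with iterations while the operators $\Ecal_s$ and $\Fcal_s$ that map the estimate error to the Frobenius residual have spectrum controlled by $\kappa$, propagating the tomographic error carefully should force a precision of roughly $\delta=\Theta(\mu/(\sqrt{n}\kappa))$, which at termination becomes $\delta^{-2}=\Ocal(n\kappa^2/\epsilon)$. With this calibration, the per-iteration QRAM cost is $\widetilde{\Ocal}(N\kappa/\delta^2)=\widetilde{\Ocal}(n^3\kappa^2/\epsilon)$; multiplying by $K$ yields the stated total of $\widetilde{\Ocal}(n^{3.5}\kappa^2/\epsilon)$ QRAM accesses.

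Finally, for the arithmetic cost I would observe that each iteration requires assembling and updating the Kronecker-structured operators $\Ecal_s$ and $\Fcal_s$, refreshing the QRAM data structure for the next block-encoding, and post-processing the $\Ocal(n^2)$-dimensional tomography output into $\Delta X,\Delta S$ via $\textbf{\textup{smat}}$ and the basis $Q_2$; together these cost $\widetilde{\Ocal}(n^4)$ per iteration. Multiplying by $K=\widetilde{\Ocal}(\sqrt{n})$ yields the claimed $\widetilde{\Ocal}(n^{4.5})$ arithmetic operations, completing the plan.
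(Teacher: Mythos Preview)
The paper does not supply its own proof of this corollary: it is quoted verbatim as Corollary~2 from \cite{augustino2023quantum}, so there is no in-paper argument to benchmark against. Your overall skeleton---multiply the $\widetilde{\Ocal}(\sqrt{n})$ outer-iteration bound from Theorem~\ref{t:Converge} by a per-iteration QRAM cost coming from QLSA plus tomography---is indeed the route taken in \cite{augustino2023quantum}.

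That said, your per-iteration accounting has a genuine gap. You invoke tomography with cost $\widetilde{\Ocal}(N/\delta^{2})$ for $\ell_\infty$-accuracy $\delta$, but the tomography primitive this paper (and \cite{augustino2023quantum}) relies on is the unitary-access procedure of \cite{van2022quantum}, whose cost scales \emph{linearly} in $1/\delta$, not quadratically. More concretely, your own numbers do not close: with $\delta=\Theta(\mu/(\sqrt{n}\kappa))$ and the terminal value $\mu=\Theta(\epsilon/n)$ one gets $\delta^{-2}=\Theta(n^{3}\kappa^{2}/\epsilon^{2})$, not the $\Ocal(n\kappa^{2}/\epsilon)$ you state, and plugging this into $N\kappa/\delta^{2}$ yields $\widetilde{\Ocal}(n^{5}\kappa^{3}/\epsilon^{2})$ per iteration rather than $\widetilde{\Ocal}(n^{3}\kappa^{2}/\epsilon)$. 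The arithmetic only balances if you take the tomography cost to be $\widetilde{\Ocal}(N/\delta)$ and calibrate roughly $\delta=\Theta(\mu/\kappa)$ so that the residual bound \eqref{e:RcAssumption1} holds; then at termination $1/\delta=\Theta(n\kappa/\epsilon)$, the per-iteration QRAM cost is $N\cdot\kappa\cdot(1/\delta)=\widetilde{\Ocal}(n^{3}\kappa^{2}/\epsilon)$, and multiplying by $\sqrt{n}$ recovers the stated bound. Fixing the tomography scaling and redoing the calibration is the missing step.
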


\subsection{Analyzing the Orthogonal Subspaces System}
In this section, we investigate the condition number of system \eqref{e:IF-Newton} because quantum solvers are sensitive to the condition number of the linear systems. 
Here, we extend the condition number analysis of OSS, done in \cite{mohammadisiahroudi2023inexact} for LO, to SDO. To compute the condition number of $M^{(k)}=\begin{bmatrix} 
    -\Fcal_s \Acal_s^{\top}& \Ecal_s Q_2
    \end{bmatrix}$, we compute the condition number of $(M^{(k)})^{\top} M^{(k)}$ which can be written as follows
\begin{align*}
    \left(M^{(k)}\right)^{\top} M^{(k)} 
    &= \begin{bmatrix}
    \Acal_s &0\\
    0 & Q_2^{\top}
    \end{bmatrix}\begin{bmatrix}
    (\Fcal_s)^2 & -(\Fcal_s)^{\top}\Ecal_s\\
    -(\Ecal_s)^{\top}\Fcal_s & (\Ecal_s)^2
    \end{bmatrix}\begin{bmatrix}
    \Acal^{\top} &0 \\
    0& Q_2
    \end{bmatrix}.
\end{align*}
Let $e_{\min}(X)$ and $e_{\max}(X)$ be the smallest and largest  eigenvalues of $X$, respectively. To analyze the OSS system, it is a prevailing assumption that 
$$\max\{e_{\max}(X),e_{\max}(S)\}\leq \omega.$$
In some SDOPs, $\omega$ can be exponentially large \cite{pataki2021exponential}. The Following Lemma provides a lower bound for the smallest eigenvalues of $X$ and $S$.
\begin{lemma}\label{lem: eigs}
    For $(X,y,S)\in \Ncal_{F}(\gamma)$, we have
    \begin{align}
        (1-\gamma)\frac{\mu}{\omega}&\leq e_{\min}(X),& e_{\max}(X)&\leq \omega,\\
        (1-\gamma)\frac{\mu}{\omega}&\leq e_{\min}(S),& e_{\max}(S)&\leq \omega.
    \end{align}
\end{lemma}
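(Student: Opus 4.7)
The plan is to translate the Frobenius neighborhood condition into a Loewner inequality on $X^{1/2}SX^{1/2}$, and then invert that inequality to obtain lower bounds on $e_{\min}(X)$ and $e_{\min}(S)$. The upper bounds $e_{\max}(X)\leq \omega$ and $e_{\max}(S)\leq \omega$ are immediate from the standing assumption on $\omega$, so all the work lies in the lower bounds.

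First I would argue that $(X,y,S)\in \mathcal{N}_F(\gamma)$ implies every eigenvalue of $X^{1/2}SX^{1/2}$ lies in the interval $[(1-\gamma)\mu,\,(1+\gamma)\mu]$. Indeed, letting $\{\lambda_i\}$ denote these eigenvalues, the Frobenius bound $\|X^{1/2}SX^{1/2}-\mu I\|_F \leq \gamma\mu$ gives $\sum_i (\lambda_i-\mu)^2\leq \gamma^2\mu^2$, so in particular $|\lambda_i-\mu|\leq \gamma\mu$ for each $i$. Consequently,
\begin{equation*}
X^{1/2} S X^{1/2} \succeq (1-\gamma)\mu\, I.
\end{equation*}

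Next I would conjugate by $X^{-1/2}$ to obtain $S\succeq (1-\gamma)\mu\, X^{-1}$, which yields
\begin{equation*}
e_{\min}(S)\;\geq\; (1-\gamma)\mu\cdot e_{\min}(X^{-1}) \;=\; \frac{(1-\gamma)\mu}{e_{\max}(X)}\;\geq\; \frac{(1-\gamma)\mu}{\omega},
\end{equation*}
using the assumption $e_{\max}(X)\leq\omega$. For the analogous lower bound on $e_{\min}(X)$, I would use the standard fact that $X^{1/2}SX^{1/2}$, $XS$, $SX$ and $S^{1/2}XS^{1/2}$ all share the same spectrum (each pair is related by a similarity transformation, using $X,S\succ 0$ in the neighborhood). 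Hence $S^{1/2}XS^{1/2}\succeq (1-\gamma)\mu\, I$ as well, and conjugating by $S^{-1/2}$ gives $X\succeq (1-\gamma)\mu\, S^{-1}$, yielding $e_{\min}(X)\geq (1-\gamma)\mu/\omega$ by the same argument.

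The argument is essentially a routine manipulation of the Loewner order, so I do not anticipate a serious obstacle. The one point that deserves a brief justification is the spectral equality between $X^{1/2}SX^{1/2}$ and $S^{1/2}XS^{1/2}$, which one obtains from the similarities $X^{1/2}SX^{1/2}=X^{-1/2}(XS)X^{1/2}$ and $S^{1/2}XS^{1/2}=S^{-1/2}(SX)S^{1/2}$ together with the fact that $XS$ and $SX$ share eigenvalues; this is where the invertibility implied by $(X,y,S)\in\Pcal^\circ\times\Dcal^\circ$ (built into membership in $\mathcal{N}_F(\gamma)$) is quietly used.
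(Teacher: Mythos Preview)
Your proof is correct. The paper, however, does not actually supply a proof of this lemma: it states the assumption $\max\{e_{\max}(X),e_{\max}(S)\}\leq\omega$ just before the lemma and then moves on without argument, so there is nothing to compare against. Your derivation---extracting the eigenvalue bound $\lambda_i\geq(1-\gamma)\mu$ from the Frobenius neighborhood condition, conjugating to obtain $S\succeq(1-\gamma)\mu X^{-1}$ and $X\succeq(1-\gamma)\mu S^{-1}$, and then invoking the $\omega$ bound---is the standard way to establish this and fills the gap cleanly.
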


We recall the following results from \cite{mohammadisiahroudi2023inexact}.
\begin{lemma}
For any full row-rank matrix $T\in \Rmbb^{m\times n}$ and any symmetric positive definite matrix $D\in \Rmbb^{n\times n}$, their condition number satisfies
$$\kappa\left(TDT^{\top}\right)\leq \kappa(D)\kappa\left(TT^{\top} \right).$$
\end{lemma}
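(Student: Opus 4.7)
The plan is to prove this inequality via the variational (Rayleigh quotient) characterization of extreme eigenvalues. Since both $TT^{\top}$ and $TDT^{\top}$ are symmetric, and the full row-rank hypothesis on $T$ together with $D \succ 0$ ensures that $TT^{\top}$ and $TDT^{\top}$ are positive definite (for any $x \neq 0$, $T^{\top}x \neq 0$ by full row-rank, so $x^{\top}TDT^{\top}x = (T^{\top}x)^{\top} D (T^{\top}x) > 0$), all condition numbers appearing in the statement are well-defined and equal to the ratio $\lambda_{\max}/\lambda_{\min}$ of the respective matrices.

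First, I would write
$$
\lambda_{\max}(TDT^{\top}) \;=\; \max_{x\neq 0}\frac{x^{\top}TDT^{\top}x}{x^{\top}x}, \qquad \lambda_{\min}(TDT^{\top}) \;=\; \min_{x\neq 0}\frac{x^{\top}TDT^{\top}x}{x^{\top}x},
$$
and apply the substitution $y = T^{\top}x$ inside the quadratic form: $x^{\top}TDT^{\top}x = y^{\top}Dy$. Using the standard two-sided Rayleigh bound $\lambda_{\min}(D)\,\|y\|^2 \leq y^{\top}Dy \leq \lambda_{\max}(D)\,\|y\|^2$, I then bound the ratio $(y^{\top}Dy)/\|x\|^2$ pointwise by $\lambda_{\max}(D)\,\|T^{\top}x\|^2/\|x\|^2$ from above and by $\lambda_{\min}(D)\,\|T^{\top}x\|^2/\|x\|^2$ from below. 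Taking the max (respectively min) over $x \neq 0$ on both sides, and using $\max_{x\neq 0}\|T^{\top}x\|^2/\|x\|^2 = \lambda_{\max}(TT^{\top})$ and the analogous identity for the minimum, yields
$$
\lambda_{\max}(TDT^{\top}) \;\leq\; \lambda_{\max}(D)\,\lambda_{\max}(TT^{\top}), \qquad \lambda_{\min}(TDT^{\top}) \;\geq\; \lambda_{\min}(D)\,\lambda_{\min}(TT^{\top}).
$$

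Dividing the first inequality by the second (both sides of the second inequality are strictly positive, justified by the positive definiteness established above) gives exactly
$$
\kappa(TDT^{\top}) \;=\; \frac{\lambda_{\max}(TDT^{\top})}{\lambda_{\min}(TDT^{\top})} \;\leq\; \frac{\lambda_{\max}(D)\,\lambda_{\max}(TT^{\top})}{\lambda_{\min}(D)\,\lambda_{\min}(TT^{\top})} \;=\; \kappa(D)\,\kappa(TT^{\top}),
$$
as desired. There is no real obstacle here, the proof is essentially a one-line Rayleigh quotient argument; the only subtlety worth making explicit is verifying that the full row-rank hypothesis is exactly what ensures both matrices $TT^{\top}$ and $TDT^{\top}$ are invertible so the condition numbers are defined. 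An alternative route would be to write $TDT^{\top} = (TT^{\top})^{1/2}\bigl[(TT^{\top})^{-1/2}TDT^{\top}(TT^{\top})^{-1/2}\bigr](TT^{\top})^{1/2}$ and use submultiplicativity of the spectral norm with $\|(TT^{\top})^{-1/2}T\|_2^2 \leq 1$ (or equal to $1$), but the Rayleigh quotient approach above is cleaner.
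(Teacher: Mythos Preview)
Your proof is correct. The paper does not actually prove this lemma; it merely recalls it from \cite{mohammadisiahroudi2023inexact}, so there is no in-paper argument to compare against. Your Rayleigh-quotient route is the standard one and exactly what one would expect the cited reference to contain.
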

Let 
$$D^{(k)}=\begin{bmatrix}
    \left(\Fcal_s^{(k)}\right)^2 & -\left(\Fcal_s^{(k)}\right)^{\top}\Ecal_s^{(k)}\\
    -\left(\Ecal_s^{(k)}\right)^{\top}\Fcal_s^{(k)} & \left(\Ecal_s^{(k)}\right)^2
    \end{bmatrix}, \quad T=\begin{bmatrix}
    \Acal^{\top} &0 \\
    0& Q_2
    \end{bmatrix}.$$ 
Then an upper bound for the condition number of the \eqref{e:IF-Newton} system can be derived as
$$\kappa(M^{(k)})=\sqrt{\kappa\left(TDT^{\top}\right)}\leq \sqrt{\kappa(D^{(k)})}\kappa_T,$$
where $\kappa_T$ is the condition number of the matrix $T$ defined above. Based on Lemma~\ref{lem: eigs}, it is easy to verify that $\sqrt{\kappa(D^{(k)})}$ is dependent on $\omega$ and $\frac{1}{\mu}$. For the case of LO, authors of \cite{mohammadisiahroudi2023inexact} showed $\sqrt{\kappa(D^{(k)})}=\Ocal(\frac{\omega^2}{\mu})$. For general SDO, for different choices of symmetrization matrix $P$ the powers of $\omega$ and $\frac{1}{\mu}$ may have different exponents.
\begin{claim}\label{claim:con}
    There exist positive integers $p>0$ and $q>0$ such that the condition number of system \eqref{e:IF-Newton} is $\Ocal(\frac{\omega^p}{\mu^q}\kappa_T)$.
\end{claim}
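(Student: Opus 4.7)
The plan is to reduce the claim to bounding $\sqrt{\kappa(D^{(k)})}$ polynomially in $\omega$ and $1/\mu$; once that is done, the stated inequality $\kappa(M^{(k)}) \leq \sqrt{\kappa(D^{(k)})}\,\kappa_T$ immediately yields the desired conclusion with explicit exponents $p, q$. Since $D^{(k)}$ is assembled entirely from $\Ecal_s^{(k)} = P \otimes_s P^{-\top}S$ and $\Fcal_s^{(k)} = PX \otimes_s P^{-\top}$, it suffices to establish polynomial upper and lower bounds on the extreme singular values of $\Ecal_s^{(k)}$ and $\Fcal_s^{(k)}$ for every scaling matrix $P$ drawn from the Monteiro-Zhang family.

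The first step is to bound $\|P\|$ and $\|P^{-1}\|$ for each row of Table~\ref{t:P}. These scaling matrices are all explicit matrix-power functions of $X$ and $S$ (for example, $P = I$ for AHO, $P = S^{1/2}$ for HKM, and $P = W^{-1/2}$ for NT), so the bounds $(1-\gamma)\mu/\omega \leq e_{\min}(X), e_{\min}(S)$ and $e_{\max}(X), e_{\max}(S) \leq \omega$ from Lemma~\ref{lem: eigs} propagate to polynomial bounds $\|P\|, \|P^{-1}\| = \Ocal(\omega^{r}/\mu^{s})$ with fixed nonnegative integer exponents depending on the direction. The second step is to invoke the spectral identity for symmetric Kronecker products, by which the singular values of $G \otimes_s K$ are simple symmetric functions of pairs $(\sigma_i(G),\sigma_j(K))$. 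Combining this identity with the first step yields $\|\Ecal_s^{(k)}\|, \|\Fcal_s^{(k)}\| = \Ocal(\omega^a/\mu^b)$ and $\sigma_{\min}(\Ecal_s^{(k)}), \sigma_{\min}(\Fcal_s^{(k)}) = \Omega(\mu^c/\omega^d)$ for fixed integers $a, b, c, d \geq 0$.

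Finally, I would assemble these bounds into a polynomial control of $\kappa(D^{(k)})$. The largest eigenvalue is handled by the triangle inequality applied to the block operator norm, giving $e_{\max}(D^{(k)}) = \Ocal(\omega^{2a}/\mu^{2b})$. The smallest eigenvalue would be addressed via a Schur complement argument, exploiting the well-known commutativity $\Ecal_s^{(k)} \Fcal_s^{(k)} = \Fcal_s^{(k)} \Ecal_s^{(k)}$ on the MZ family in order to simultaneously triangularize these blocks and reduce the spectrum of $D^{(k)}$ to that of $2 \times 2$ submatrices indexed by their common eigenspaces. The main obstacle is precisely this lower bound: because $\Ecal_s^{(k)}$ and $\Fcal_s^{(k)}$ are generally nonsymmetric, $D^{(k)}$ is not a Gram matrix and a naive positivity argument does not apply. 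Once a polynomial lower bound on $e_{\min}(D^{(k)})$ is secured via the commutativity-plus-Schur-complement approach, the existence of integers $p, q > 0$ with $\sqrt{\kappa(D^{(k)})} = \Ocal(\omega^p/\mu^q)$ is immediate, and the claim follows by combining with the factor $\kappa_T$.
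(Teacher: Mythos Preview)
The paper does not actually prove Claim~\ref{claim:con}. It is deliberately labeled a \emph{Claim} rather than a lemma or theorem, and the text immediately following it states explicitly that ``it is not easy to prove that Claim~\ref{claim:con} holds for any symmetrization matrix $P$.'' What the paper offers instead is (i) the informal observation that, for each of the three specific choices of $P$ in Table~\ref{t:P}, the blocks $\Ecal_s^{(k)}$ and $\Fcal_s^{(k)}$ are assembled from factors $X^{\pm 1}, X^{\pm 1/2}, S^{\pm 1}, S^{\pm 1/2}$ whose individual condition numbers are $\Ocal(\omega^2/\mu)$ or $\Ocal(\omega/\sqrt{\mu})$, and (ii) a rigorous verification for the AHO direction only (Theorem~\ref{theo: conditionAHO}), which cites \cite{alizadeh1998primal} for the bound $\kappa(E^{(k)}) = \Ocal(\omega^2/\mu)$ with $E^{(k)} = [\Fcal_s^{(k)}\ \Ecal_s^{(k)}]$ and then uses the Gram identity $D^{(k)} = (E^{(k)})^\top E^{(k)}$.

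Your first two steps coincide with the paper's informal sketch. Your third step, however, goes further than anything in the paper, and this is where you should be cautious. First, your ``main obstacle'' is partly a phantom: for AHO (and more generally whenever $\Ecal_s^{(k)}$ and $\Fcal_s^{(k)}$ are symmetric), $D^{(k)}$ \emph{is} a Gram matrix up to a sign in one block column, and the paper exploits exactly this to finish the AHO case without any Schur-complement argument. Second, the commutativity $\Ecal_s^{(k)}\Fcal_s^{(k)} = \Fcal_s^{(k)}\Ecal_s^{(k)}$ that you describe as ``well-known on the MZ family'' is not established in the paper and does not hold in general for symmetric Kronecker products, since $(A\otimes_s B)(C\otimes_s D) \neq (AC)\otimes_s(BD)$; you would need to verify it direction-by-direction, at which point a direct singular-value computation of $[\Fcal_s^{(k)}\ \Ecal_s^{(k)}]$ (as the paper does for AHO) is simpler. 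In short, your plan is more ambitious than the paper's, but the paper never intended to supply a general proof, and your proposed mechanism for the lower bound rests on an unproved commutation relation.
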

Although it is not easy to prove that Claim~\ref{claim:con} holds for any symmetrization matrix $P$, it is   
straightforward to check that it is true for matrices presented in Table~\ref{t:P}. The reason is that $\Fcal_s^{(k)}$ and $\Ecal_s^{(k)}$ can be decomposed to following elements
\begin{align*}
    \kappa(X)=\Ocal \left(\frac{\omega^2}{\mu} \right),\kappa\left(X^{-1}\right)=\Ocal\left(\frac{\omega^2}{\mu}\right),\kappa\left(X^{\frac{1}{2}}\right)=\Ocal\left(\frac{\omega}{\sqrt{\mu}}\right),\kappa\left(X^{-\frac{1}{2}}\right)=\Ocal\left(\frac{\omega}{\sqrt{\mu}}\right),\\
    \kappa(S)=\Ocal\left(\frac{\omega^2}{\mu}\right),\kappa\left(S^{-1}\right)=\Ocal\left(\frac{\omega^2}{\mu}\right),\kappa\left(S^{\frac{1}{2}}\right)=\Ocal\left(\frac{\omega}{\sqrt{\mu}}\right),\kappa\left(S^{-\frac{1}{2}}\right)=\Ocal\left(\frac{\omega}{\sqrt{\mu}}\right).
\end{align*}
The following Theorem provides an upper bound for the condition number of system \eqref{e:IF-Newton} for the AHO direction.
\begin{theorem}\label{theo: conditionAHO}
    Let $P=I$. We have $\kappa(M^{(k)})=\Ocal\left(\frac{\omega^2}{\mu}\kappa_T\right)$.
\end{theorem}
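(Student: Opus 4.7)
The plan is to specialize $P = I$ in the definitions of $\Ecal_s$ and $\Fcal_s$, extract spectral bounds for these operators from Lemma~\ref{lem: eigs}, and then feed these into the already-established inequality $\kappa(M^{(k)}) \leq \sqrt{\kappa(D^{(k)})}\, \kappa_T$ recorded in the paragraph preceding Theorem~\ref{theo: conditionAHO}. With $P = I$ we have
$$\Ecal_s = I \otimes_s S, \qquad \Fcal_s = X \otimes_s I,$$
and both are symmetric matrices because $X, S \in \Scal^n$.

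First, I would translate the eigenvalue bounds from Lemma~\ref{lem: eigs} into spectral bounds on $\Ecal_s$ and $\Fcal_s$. For a symmetric matrix $A$, it is standard that the eigenvalues of $A \otimes_s I$ (equivalently $I \otimes_s A$) are $\tfrac12(\lambda_i(A) + \lambda_j(A))$ for $i \leq j$, so they lie in the interval $[\lambda_{\min}(A), \lambda_{\max}(A)]$. Applied to $X$ and $S$, and combined with Lemma~\ref{lem: eigs}, this yields
$$(1-\gamma)\frac{\mu}{\omega} \leq \lambda_{\min}(\Fcal_s),\ \lambda_{\min}(\Ecal_s), \qquad \lambda_{\max}(\Fcal_s),\ \lambda_{\max}(\Ecal_s) \leq \omega.$$

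Second, I would exploit the block structure of $D^{(k)}$. Define $G = [\Fcal_s,\ -\Ecal_s]$. Using the symmetry of $\Ecal_s$ and $\Fcal_s$, a direct multiplication shows $D^{(k)} = G^\top G$, so the nonzero spectrum of $D^{(k)}$ coincides with the spectrum of $GG^\top = \Fcal_s^2 + \Ecal_s^2$. The upper estimate $\|\Fcal_s\|,\|\Ecal_s\| \leq \omega$ gives $\lambda_{\max}(GG^\top) \leq 2\omega^2$, and since $\Ecal_s^2 \succeq 0$ we have $\lambda_{\min}(GG^\top) \geq \lambda_{\min}(\Fcal_s^2) = \lambda_{\min}(\Fcal_s)^2 \geq (1-\gamma)^2 \mu^2/\omega^2$. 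Together these imply $\sqrt{\kappa(D^{(k)})} = \Ocal(\omega^2/\mu)$, and the stated bound on $\kappa(M^{(k)})$ follows.

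The main subtlety I anticipate is that $D^{(k)} = G^\top G$ with $G$ a wide matrix, so $D^{(k)}$ is only positive semidefinite (rank at most $n(n+1)/2$ inside an $n(n+1)\times n(n+1)$ block matrix), not strictly positive definite as required by the lemma $\kappa(TDT^\top) \leq \kappa(D)\kappa(TT^\top)$. I would resolve this either by restricting $D^{(k)}$ to its range---where it is positive definite with the same nonzero spectrum---or, more directly, by observing that $M^{(k)} = -GT$ and bounding $\sigma_{\max}(M^{(k)})$ and $\sigma_{\min}(M^{(k)})$ from the singular values of $G$ and $T$ (using that the columns of $\Acal^\top$ and $Q_2$ span complementary orthogonal subspaces of $\Rmbb^{n(n+1)/2}$, so that $T$ has full column rank). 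Either route yields $\kappa(M^{(k)}) = \Ocal(\omega^2 \kappa_T/\mu)$.
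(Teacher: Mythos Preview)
Your proposal is correct and follows essentially the same route as the paper: both identify $D^{(k)}$ as a Gram matrix $G^\top G$ (the paper writes $E^{(k)} = [\Fcal_s,\Ecal_s]$ instead of your $G = [\Fcal_s,-\Ecal_s]$, which differs only by a sign in one block and hence has the same singular values), bound the condition number of this factor, and then invoke the inequality $\kappa(M^{(k)}) \leq \sqrt{\kappa(D^{(k)})}\,\kappa_T$.

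The differences are minor but worth noting. The paper obtains the bound $\kappa(E^{(k)}) = \Ocal(\omega^2/\mu)$ by citing Section~4 of \cite{alizadeh1998primal}, whereas you derive it self-containedly from Lemma~\ref{lem: eigs} via the spectrum of $GG^\top = \Fcal_s^2 + \Ecal_s^2$; your argument is more explicit and does not require the reader to consult an external reference. You also correctly flag a point the paper glosses over: since $D^{(k)} = G^\top G$ with $G$ wide, $D^{(k)}$ is only positive semidefinite, so the lemma on $\kappa(TDT^\top)$ does not apply verbatim. Your proposed fix---either restricting to the range of $D^{(k)}$ or directly bounding the singular values of $M^{(k)} = -GT$ using that $T$ has full column rank---is sound, and in fact slightly strengthens the paper's argument.
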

\begin{proof}
    Based on Section 4 of \cite{alizadeh1998primal}, we have 
    $$\kappa(E^{(k)})=\Ocal\left(\frac{\omega^2}{\mu}\right)$$ where $E^{(k)}=\begin{bmatrix}
        \Fcal_s^{(k)} &\Ecal_s^{(k)}
    \end{bmatrix}$. Since $D^{(k)}=(E^{(k)})^{\top}E^{(k)}$, we have 
    $$\kappa(D^{(k)})=\Ocal\left(\frac{\omega^4}{\mu^2}\right),\quad \kappa(M^{(k)})=\Ocal\left(\frac{\omega^2}{\mu}\kappa_T\right).$$
    The proof is complete.
\end{proof}
As we can see, the condition number of the OSS system may grow to infinity as $\mu \to 0$. 
We show that in our IR method, we solve SDO problems with limited precision, and the general upper bound for the condition number is bounded by $\Ocal(\omega^2\kappa_T)$, which is constant and depends only on input data.

\subsection{An Inexact-Feasible QIPM for the Homogeneous Self-dual Embedding Model}\label{sec: embedding}
In this section, we use the canonical formulation for SDOPs as
\begin{align}\label{e:SDO canonical}
     z_P &= \inf_X \left\{ C \bullet X: A_i \bullet X +u_i= b_i,~\forall i \in [m], \ X \succeq 0, \ u\geq 0 \right\},\\  \label{e:SDO D-canonical}
    z_D &=  \sup_{y, S} \left\{ b^{\top} y:\sum_{i \in [m]} y_i A_i + S = C,~S\succeq 0, \ y \geq 0 \right\}.
\end{align}
One can easily change the standard formulation to canonical formulation and vice versa. 
In line with our discussion on the central path, recall that for a feasible IPM, we must assume the IPC, e.g., that a strictly feasible pair $X$ and $(y, S)$ with $X \succ 0$ and $S \succ 0$ exists \cite{de1997initialization}.
It is known that with the self-dual embedding model this condition may be assumed without loss of generality \cite{de1997initialization}. 
To see this, note that any primal-dual SDO pair of the form \eqref{e:SDO canonical} has a self-dual embedding formulation given by
\begin{align*}
    &\min \;\; (n+m+2)\theta \\
    &\begin{matrix}
    & &A_i \bullet X&-b_i\tau&+ \bar{b}_i\theta&-u_i&= 0,\hfill\\
    &-\sum_{j \in [m]} y_j A_j & &+C\tau&- \overbar{C}\theta&-S&= 0,\hfill\\
    &b^{\top} y & - C \bullet X &  &+ \bar{o}\theta &-\phi&= 0,\hfill\\
    &-\bar{b}^{\top} y&+ \overbar{C} \bullet X&-\bar{o} \tau& &-\rho &= -(n+m+2),\\
    \end{matrix}\\
    &X\succeq 0, \; S\succeq 0, \; \tau\geq 0, \; \theta\geq 0, \; \phi\geq 0, \; \rho\geq 0, \;u\geq 0, \; y\geq 0,
\end{align*}
where 
\begin{align*}
    &\bar{b_i}=b_i+1- A_i \bullet I,\\
    &\overbar{C}=C-I-\sum_{i \in [m]} A_j,\\
    &\bar{o}=1+ \overbar{C} \bullet I -b^{\top}e.
\end{align*}
Then, it can be easily verified that $y^0=u^0=e$, $X^0=S^0=I$ and $\theta^0=\tau^0=\phi^0=\rho^0=1$ is a feasible interior starting solution. 

Let $(y^*,X^*,\tau^*,\theta^*,u^*,S^*,\phi^*,\rho^*)$ denote a maximally complementary optimal solution of the self-dual embedding problem, then:
\begin{itemize}
    \item[(a)] if $\tau^*>0$, then a primal-dual complementary pair $(\frac{1}{\tau^*}X^*,\frac{1}{\tau^*}S^*)$ is obtained for the original primal and dual problems,
    \item[(b)] if $\tau^*=0$ and $\phi^*>0$, then a primal and/or dual improving ray is detected,
    \item[(c)] if $\tau^*=\phi^*=0$, then no complementary pair exists and neither primal nor dual improving ray exists.
\end{itemize}
The feasible Newton system for this formulation using the {\rm \bf svec} and $\Acal_s$ notation is as follows:
{\small
\begin{equation}\label{eq: newton system of self-dual}
\begin{aligned}
    \begin{matrix}
    & &\mathcal{A}_s\textbf{svec}(\Delta X^{(k)}) &-b\Delta \tau^{(k)}  &+\bar{b}\Delta\theta^{(k)}&-\Delta u^{(k)}&= 0,\\
    &-\mathcal{A}_s^{\top}\Delta y^{(k)} & &+\textbf{svec}(C)\Delta \tau^{(k)} &+ \textbf{svec}(\overbar{C})\Delta\theta^{(k)}&-\textbf{svec}(\Delta S^{(k)})&= 0,\\
    &b^{\top} \Delta y^{(k)}& -\textbf{svec}(C)^{\top} \textbf{svec}(\Delta X^{(k)}) & & + \bar{o}\Delta \theta^{(k)}&-\Delta \phi^{(k)}&= 0,\\
    &-\bar{b}^{\top} \Delta y^{(k)}&-\textbf{svec}(\overbar{C})^{\top} \textbf{svec}(\Delta X^{(k)})&-\bar{o} \Delta \tau^{(k)}  &&-\Delta\rho^{(k)}&= 0,\\
    \end{matrix}\\
    H_P(X^{(k)}\Delta S^{(k)}+ \Delta X^{(k)} S) =\sigma \mu^{(k)} I- H_P(X^{(k)} S^{(k)}) ,\qquad\qquad\qquad\qquad\\
    Y^{(k)}\Delta u^{(k)}+ U^{(k)}\Delta y^{(k)}=\sigma \mu^{(k)} e -Y^{(k)} u^{(k)},\qquad\qquad\qquad\qquad\\
    \tau^{(k)}\Delta\phi+\phi^{(k)}\Delta\tau^{(k)} = \sigma\mu^{(k)}- \tau^{(k)}\phi^{(k)},\qquad\qquad\qquad\qquad\\
    \theta^{(k)}\Delta\rho^{(k)}+\rho^{(k)}\Delta\theta^{(k)} =\sigma\mu^{(k)}-\theta^{(k)}\rho^{(k)},\qquad\qquad\qquad\qquad
\end{aligned}
\end{equation}
}
where $Y^{(k)}={\rm diag}(y^{(k)})$ and $U^{(k)}={\rm diag}(u^{(k)})$.

To derive the OSS system for Newton system \eqref{eq: newton system of self-dual}, we define 
\begin{equation}\label{eq: self-dual matrices}
\begin{aligned}
    \mathcal{P}&=\begin{bmatrix}
    I &\mathbf{0}&\mathbf{0} &\mathbf{0}&\mathbf{0} &-\mathcal{A}_s&b &-\bar{b}\\
    \mathbf{0}&I&\mathbf{0}&\mathbf{0}&\mathcal{A}_s^{\top}&\mathbf{0}&-\textbf{svec}(C)&-\textbf{svec}(\overbar{C})\\
    \mathbf{0}&\mathbf{0}& 1 &\mathbf{0}&-b^{\top} &\textbf{svec}(C)^{\top} &\mathbf{0}&-\bar{o}\\
    \mathbf{0}&\mathbf{0}&\mathbf{0}& 1 &\bar{b}^{\top}&\textbf{svec}(\overbar{C})^{\top} &\bar{o}^{\top} &\mathbf{0} \\
    \end{bmatrix},\\
    \mathcal{D}&=\begin{bmatrix}
    Y^{(k)}&\mathbf{0}&\mathbf{0}&\mathbf{0}&U^{(k)}&\mathbf{0}&\mathbf{0}&\mathbf{0}\\
    \mathbf{0}&\Fcal_s &\mathbf{0}&\mathbf{0}&\mathbf{0}&\Ecal_s& \mathbf{0}&\mathbf{0}\\
    \mathbf{0}&\mathbf{0}&\tau^{(k)}&\mathbf{0}&\mathbf{0}&\mathbf{0}&\phi^{(k)}&\mathbf{0}\\
    \mathbf{0}&\mathbf{0}&\mathbf{0}&\theta^{(k)}&\mathbf{0}&\mathbf{0}&\mathbf{0}&\rho^{(k)}
    \end{bmatrix},\\
    \mathcal{R}&=\begin{bmatrix}
    \sigma \mu^{(k)} e-Y^{(k)}u^{(k)}\\
    \sigma \mu^{(k)} e- H_P(X^{(k)}S^{(k)}) \\
    \sigma \mu^{(k)}-\tau^{(k)}\phi^{(k)}\\
    \sigma \mu^{(k)}-\theta^{(k)}\rho^{(k)}
    \end{bmatrix},\\
    \Delta \mathcal{X}&=(\Delta u^{(k)}, \textbf{svec}(\Delta S^{(k)}),\Delta \phi ^{(k)},\Delta\rho^{(k)}, \Delta y^{(k)},\textbf{svec}(\Delta X^{(k)}),\Delta \tau^{(k)},\Delta \theta^{(k)})^{\top},
\end{aligned}
\end{equation}
where $\mathbf{0}$ is the all-zero matrix. Then, the Newton system can be simplified as
\begin{equation}\label{eq: newton system of self-dual2}
\begin{aligned}
   \Delta \mathcal{X}&\in \text{Null}(\mathcal{P}), \\
   \mathcal{D}\Delta \mathcal{X}&=\mathcal{R}.
\end{aligned}
\end{equation}
A basis for the null space of $\mathcal{P}$ is given by the  column vectors of 
$$\mathcal{V}=\begin{bmatrix}
    \mathbf{0} &-\mathcal{A}&b &-\bar{b}\\
    \mathcal{A}^{\top}&\mathbf{0}&-\textbf{svec}(C)&-\textbf{svec}(\overbar{C})\\
    -b^{\top} &\textbf{svec}(C)^{\top} &\mathbf{0}&-\bar{o}\\
    \bar{b}^{\top}&\textbf{svec}(\overbar{C})^{\top} &\bar{o}^{\top} &\mathbf{0} \\
    -I &\mathbf{0}&\mathbf{0} &\mathbf{0}\\
    \mathbf{0}&-I&\mathbf{0}&\mathbf{0}\\
    \mathbf{0}&\mathbf{0}& -1 &\mathbf{0}\\
    \mathbf{0}&\mathbf{0}&\mathbf{0}& -1 
    \end{bmatrix}.$$
This demonstrated that if we have an SDOP in canonical form, then a basis for the null space of $\mathcal{P}$ can be constructed without any factorization or elimination procedure. 
Thus, the cost of preprocessing is negligible in this setting.
The OSS for this formulation at the $k^{\rm th}$  iteration is
\begin{equation}\label{eq: OSS of self-dual}
\mathcal{D}^{(k)}\mathcal{V}\lambda^{(k)}=\mathcal{R}^{(k)},
\end{equation}
where $\mathcal{\lambda}^{(k)}\in\mathbb{R}^{\frac{n(n+1)}{2}+m+2}$ and the size of the system will be $\frac{n(n+1)}{2}+m+2$. We can calculate the Newton direction by $\Delta \mathcal{X}=\mathcal{V}\mathcal{\lambda}^{(k)}$. 
Even if $\tilde{\lambda}^{(k)}$ is an inexact solution of system \eqref{eq: OSS of self-dual}, then the inexact direction $\widetilde{\Delta \mathcal{X}}^{(k)} = \Vcal \Tilde{\lambda}^{(k)}$ is still always a feasible direction, since $\widetilde{\Delta \mathcal{X}}^{(k)}\in \text{Null}(\mathcal{A})$.

To have a convergent IF-IPM, we need that for the error $\|r^{(k)}\|\leq \beta \mu^{(k)}$ holds, where $r^{(k)}=\mathcal{D}\mathcal{V}\tilde{\lambda} - \mathcal{R}
 = \mathcal{D}\mathcal{V}(\tilde{\lambda}^{(k)}-\lambda^{(k)})$. So, the error bound $\epsilon^{(k)}=\frac{\beta\mu^{(k)}}{\|\mathcal{D}\mathcal{V}\|}$ is needed.

\begin{lemma}\label{lemma: oss for self-dual}
Let $( u^{(k)},  S^{(k)}, \phi ^{(k)},\rho^{(k)},  y^{(k)}, X^{(k)}, \tau^{(k)}, \theta^{(k)})\in \mathcal{PD}_0$ then the following statements hold.
\begin{enumerate}
    \item Systems \eqref{eq: OSS of self-dual} and \eqref{eq: newton system of self-dual} are equivalent.
    \item System \eqref{eq: OSS of self-dual} has a unique solution.
    \item For the solution of system \eqref{eq: OSS of self-dual} orthogonality holds, i.e.,  \\
     $(\Delta X^{(k)}\bullet \Delta S^{(k)})+(\Delta y^{(k)})^{\top}\Delta u^{(k)}+\Delta \tau^{(k)} \Delta \phi^{(k)}+\Delta \theta \Delta \rho =0$.
\end{enumerate}

\end{lemma}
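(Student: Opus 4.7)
My plan is to handle the three claims in order, mirroring the strategy used to analyze the standard-form OSS but exploiting the special block structure coming from the self-dual embedding.

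For Part (1), I would first verify directly that the columns of $\mathcal{V}$ lie in $\text{Null}(\mathcal{P})$ via a block-by-block multiplication using the definitions in \eqref{eq: self-dual matrices}: the top identity columns of $\mathcal{P}$ cancel the first four row-blocks of $\mathcal{V}$, while the remaining row-blocks of $\mathcal{V}$ reproduce the right submatrix of $\mathcal{P}$ with a sign flip. A dimension count then shows these linearly independent columns span the entire null space of $\mathcal{P}$. Substituting $\Delta\mathcal{X} = \mathcal{V}\lambda$ into the second block $\mathcal{D}\Delta\mathcal{X} = \mathcal{R}$ of \eqref{eq: newton system of self-dual2} produces \eqref{eq: OSS of self-dual}, and the converse direction is immediate, yielding equivalence.

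Part (2) follows by combining Part (1) with the standard fact that the self-dual embedding admits a unique Newton direction at any strictly feasible interior iterate \cite{de1997initialization}. Because the columns of $\mathcal{V}$ are linearly independent, the map $\lambda \mapsto \mathcal{V}\lambda$ is injective, so uniqueness of $\Delta\mathcal{X}$ transfers to uniqueness of $\lambda$; equivalently, $\mathcal{D}\mathcal{V}$ is square of matching dimension and nonsingular.

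The substantive step is Part (3). The key observation is that the submatrix $N$ of $\mathcal{P}$ formed by its last four column-blocks (after stripping the identity columns) is skew-symmetric; this follows by pairing off the six off-diagonal blocks (for instance, block $(1,2)$ equals $-\mathcal{A}_s$ while block $(2,1)$ equals $\mathcal{A}_s^{\top}$, and similarly for the entries involving $b$, $\bar{b}$, $\textbf{svec}(C)$, $\textbf{svec}(\overbar{C})$, and $\bar{o}$) and is a direct reflection of the self-dual structure of the embedding. Partition $\Delta\mathcal{X}$ into a ``slack'' part $\Delta\mathcal{X}_s = (\Delta u,\textbf{svec}(\Delta S),\Delta\phi,\Delta\rho)$ and a ``primal'' part $\Delta\mathcal{X}_p = (\Delta y,\textbf{svec}(\Delta X),\Delta\tau,\Delta\theta)$. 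Then $\mathcal{P}\Delta\mathcal{X} = \mathbf{0}$ reduces to $\Delta\mathcal{X}_s = -N\Delta\mathcal{X}_p$, and the orthogonality quantity in the lemma is exactly $\Delta\mathcal{X}_p^{\top}\Delta\mathcal{X}_s = -\Delta\mathcal{X}_p^{\top} N \Delta\mathcal{X}_p$, which vanishes because $v^{\top}Nv = 0$ for every skew-symmetric $N$.

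I expect the main obstacle to be the sign bookkeeping: verifying skew-symmetry of $N$ and aligning the slack/primal partition so that its natural inner product exactly matches the one in the lemma both require careful tracking of signs on $\bar{b}$, $\overbar{C}$, and $\bar{o}$ as they appear in the embedding. Once those sign conventions are pinned down, the rest of the argument is a one-line consequence of the skew-symmetric quadratic form identity.
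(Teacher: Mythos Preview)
Your proposal is correct. The paper does not actually supply a proof for this lemma: it is stated as the self-dual analogue of the standard OSS results (Propositions 10--12 of \cite{augustino2023quantum}) cited earlier and left without argument. Your plan fills in exactly the details one would expect, and the skew-symmetry observation for Part~(3)---that the right half $N$ of $\mathcal{P}$ satisfies $N^{\top}=-N$, so $\Delta\mathcal{X}_p^{\top}\Delta\mathcal{X}_s=-\Delta\mathcal{X}_p^{\top}N\Delta\mathcal{X}_p=0$---is precisely the structural feature of the self-dual embedding that makes orthogonality automatic, without needing the null-space/row-space decomposition used in the standard-form case.
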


Now, we can develop our IF-QIPM for the self-dual embedding model as Algorithm~\ref{alg: IF-IPM for self-dual}.
\begin{algorithm}[H]
\SetAlgoLined
\KwIn{Choose constants $\beta$, $\gamma$ and $\delta$ in $(0,1)$ and $\sigma = 1 - \frac{\delta}{\sqrt{n}}$.\\ 
\quad $(y^0,X^0,\tau^0,\theta^0,u^0,S^0,\phi^0,\rho^0)\gets(e,I,1,1,e,I,1,1)$\\
\quad 
$\mu^0\gets 1$}
    \While{$(n+m+2)\mu^{(k)} > \epsilon$}{ 
    \begin{enumerate}
        \item $\epsilon^{(k)} \gets \beta\frac{\mu^{(k)}}{\|\mathcal{D}^{(k)}\mathcal{V}\|}$
        \item $\lambda^{(k)} \gets$ \textbf{solve} system (\ref{eq: OSS of self-dual}) with error bound $\epsilon^{(k)}$ 
        \item $\Delta \mathcal{X}^{(k)}= \mathcal{V}\lambda^{(k)}$  \label{alg-step:x and s-P}
        \item $\mathcal{X}^{(k+1)} \gets \mathcal{X}^{(k)}+\Delta \mathcal{X}^{(k)}$
        \item$\mu^{(k)}\gets\frac{(y^{(k)})^{\top}u^{(k)}+X^{(k)}\bullet S^{(k)})+\tau^{(k)} \phi ^{(k)}+\rho^{(k)} \theta^{(k)}}{n+m+2}$
    \end{enumerate}
  } 
 \caption{IF-QIPM for the Self-dual Embedding Formulation}
\label{alg: IF-IPM for self-dual}
\end{algorithm}
In a QIPM we solve system (\ref{eq: OSS of self-dual}) using a QLSA. The convergence analysis of the IF-QIPM of Algorithm~\ref{alg: IF-IPM for self-dual} applied to the self-dual embedding formulation is analogous to the analysis of the IF-QIPM of \cite{augustino2023quantum}.
%
% - - - - - - - - - - - - - - -
\begin{theorem}\label{theorem: convergence of IF-QIPM for self-dual}
 For the IF-QIPM of Algorithm~\ref{alg: IF-IPM for self-dual}  applied to the self-dual embedding formulation, the following statements hold.
 \begin{enumerate}
     \item The sequence $\{\mu_k\}_{k\in \mathbb{N}}$ converges linearly to zero.
     \item For any $k\in \mathbb{N}$, $\mathcal{X}^{(k)}\in \mathcal{N}_F(\gamma)$.
     \item After at most $\Ocal(\sqrt{n}\log(\frac{n}{\epsilon}))$ iterations, the IF-QIPM delivers an $\epsilon$-optimal solution.
     \item The IF-QIPM finds an $\epsilon$-optimal solution  using at most
$$ \Ocal \left( n^{3.5} \frac{\omega^{4}\kappa_T^2}{\epsilon^{3}} \cdot  \operatorname{polylog} \left(n, \kappa, \frac{1}{\epsilon} \right) \right)  $$
QRAM accesses and $ \Ocal \left( n^{4.5} \cdot  \operatorname{polylog} \left( \kappa, \frac{1}{\epsilon} \right) \right) $ arithmetic operations.
 \end{enumerate}
\end{theorem}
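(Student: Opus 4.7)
The plan is to reduce everything to the convergence and complexity analysis of the standard-form IF-QIPM summarized in Theorem~\ref{t:Converge} and Corollary~\ref{corr:runtimeIF}, applied now to the self-dual embedded problem of ``effective'' dimension $\tilde{n} = n+m+2$. The self-dual embedding was constructed precisely to supply a strictly feasible, well-centered starting point, so that a feasible short-step IPM framework applies without any additional assumption on the input SDO pair.

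For parts (1)--(3), I would first verify that Algorithm~\ref{alg: IF-IPM for self-dual} satisfies every hypothesis of Theorem~\ref{t:Converge}. The starting iterate $(e,I,1,1,e,I,1,1)$ with $\mu^0 = 1$ makes every complementarity product equal to $1$, so it lies exactly on the central path and belongs to $\mathcal{N}_F(\gamma)$ for any admissible $\gamma$. Lemma~\ref{lemma: oss for self-dual} provides the orthogonality identity $\Delta X \bullet \Delta S + \Delta y^{\top} \Delta u + \Delta\tau \Delta\phi + \Delta\theta\Delta\rho = 0$ for the OSS-based directions, which is the self-dual analogue of $\Delta X \bullet \Delta S = 0$ used in Theorem~\ref{t:Converge}. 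The adaptive inexactness threshold $\epsilon^{(k)} = \beta\mu^{(k)}/\|\mathcal{D}^{(k)}\mathcal{V}\|$ translates directly into $\|r^{(k)}\|\leq\beta\mu^{(k)}$, so the residual assumption \eqref{e:RcAssumption1} holds at every iteration. Invoking Theorem~\ref{t:Converge} on the embedded system then delivers, inductively, $\mathcal{X}^{(k)}\in\mathcal{N}_F(\gamma)$ (part (2)) together with the contraction $\mu^{(k+1)} = (1-\delta/\sqrt{\tilde{n}})\mu^{(k)}$ (part (1)). Iterating this recurrence from $\mu^0 = 1$ until $\tilde{n}\mu^{(k)}\leq\epsilon$ yields the standard short-step bound $k = \Ocal(\sqrt{\tilde{n}}\log(\tilde{n}/\epsilon)) = \Ocal(\sqrt{n}\log(n/\epsilon))$ for part (3).

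For part (4), I would combine the iteration count from (3) with the per-iteration cost of the QLSA-plus-tomography pipeline, following the pattern of Corollary~\ref{corr:runtimeIF}. The key substitution comes from Theorem~\ref{theo: conditionAHO}: the OSS condition number satisfies $\kappa(M^{(k)}) = \Ocal(\omega^{2}\kappa_T/\mu^{(k)})$. Plugging this bound into the Corollary~\ref{corr:runtimeIF} template and tracking how $\mu^{(k)}$ shrinks along the central path until termination gives $\kappa^{2} = \Ocal(\omega^{4}\kappa_T^{2}/\epsilon^{2})$, and substituting $\kappa^{2}/\epsilon$ in the Corollary~\ref{corr:runtimeIF} bound produces the claimed $\widetilde{\Ocal}(n^{3.5}\omega^{4}\kappa_T^{2}/\epsilon^{3})$ QRAM-access count and $\widetilde{\Ocal}(n^{4.5})$ arithmetic-operation count.

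The main technical obstacle will be bookkeeping the polylogarithmic and lower-order polynomial factors when passing from the standard-form analysis to the self-dual embedding. Specifically, one must verify that $\|\mathcal{D}^{(k)}\mathcal{V}\|$ is bounded polynomially in the input data $(A_i, b, C)$ and $\omega$, so that the precision tightening $1/\epsilon^{(k)}$ only contributes polylogarithmic terms to the QLSA and tomography cost; and one must check that replacing $n$ by $\tilde{n} = n + m + 2$ in the short-step and block-encoding complexities, with $m = \Ocal(n^{2})$, does not inflate the leading exponent of $n$ beyond the stated $n^{3.5}$. Once these are handled, parts (1)--(4) follow directly from the cited black-box results.
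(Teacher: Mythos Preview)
Your proposal is correct and follows essentially the same approach as the paper: the paper's proof is a single sentence stating that the result follows by combining Theorems~\ref{t:Converge} and~\ref{theo: conditionAHO} with Corollary~\ref{corr:runtimeIF} for the self-dual embedding, which is precisely the reduction you carry out. Your write-up is considerably more detailed than the paper's, and in particular your closing remarks about tracking the dimension change $n \mapsto \tilde n = n+m+2$ and the polylogarithmic bookkeeping raise legitimate verification points that the paper simply suppresses.
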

\begin{proof}
The proof is straightforward by combining the results of Theorems~\ref{t:Converge}, \ref{theo: conditionAHO},  and Corollary~\ref{corr:runtimeIF} for the self-dual embedding formulation.
\end{proof}
% - - - - - - - - - - - - - - -
%
As we can see, the IF-QIPM has polynomial dependence on $\frac{1}{\epsilon}$ which means that it can be used for finding an inexact solution for an SDOP. To find a precise solution and improve the complexity with respect to inverse precision, we use the IR method of Algorithm~\ref{alg:IR-SDO} calling IF-QIPM as a subroutine.  
We first embed the original problem in the self-dual formulation and then apply the proposed IR-IF-QIPM, IR method of Algorithm \ref{alg:IR-SDO}, where the refining problem is solved by the IF-QIPM of Algorithm \ref{alg: IF-IPM for self-dual}. In this framework, one can set the target precision of the IF-QIPM to $\epsilon=10^{-2}$, and the final precision of IR method as needed. 
The following theorem presents the total complexity of the proposed IR-IF-QIPM for solving an SDO problem 
\begin{theorem}\label{theo: IR-IF-QIPM}
    The proposed IR-IF-QIPM, the IR method of Algorithm \ref{alg:IR-SDO} augmented with the IF-QIPM of Algorithm \ref{alg: IF-IPM for self-dual}, finds an $\epsilon$-optimal solution using at most
$$ \Ocal \left( n^{3.5}\omega^{4}\kappa_T^2 \cdot  \operatorname{polylog} \left(n, \kappa, \frac{1}{\epsilon} \right) \right)  $$
QRAM accesses and $ \Ocal \left( n^{4.5} \cdot  \operatorname{polylog} \left( \kappa, \frac{1}{\epsilon} \right) \right) $ arithmetic operations.
\end{theorem}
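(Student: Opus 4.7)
The proof will be a straightforward composition of the two complexity results already established: the $\Ocal(\log\log(1/\epsilon))$ outer-iteration bound for the IR scheme (Corollary \ref{corr:irSDO}) and the per-call cost of the IF-QIPM subroutine (Theorem \ref{theorem: convergence of IF-QIPM for self-dual}). The plan is to fix a constant precision $\epsilon_0 \in (0,1)$ (e.g.\ $\epsilon_0 = 10^{-2}$) for each oracle call, so that the high-precision parameter $\epsilon$ in the overall theorem statement plays a role only in bounding the number of outer iterations and in the logarithmic factors of the subroutine.

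First I would invoke Corollary \ref{corr:irSDO} to conclude that Algorithm \ref{alg:IR-SDO}, instantiated on the self-dual embedding of \eqref{e:SDO}-\eqref{e:SDD}, terminates after at most $K = \Ocal(\log\log(1/\epsilon))$ outer iterations. Next, at each outer iteration I would use Theorem \ref{theorem: convergence of IF-QIPM for self-dual} on the refining problem \eqref{e:SDO_ref}-\eqref{e:SDOD_ref} with target accuracy $\epsilon_0$. As discussed in Section~\ref{sec: convergence}, a strictly feasible starting point is available in closed form (e.g.\ $(\eta^{(k)} X^{(k)}, 0, \eta^{(k)} S^{(k)})$), and $n\mathring{\mu}^{(k)}$ enters the IF-QIPM iteration count $\Ocal(\sqrt{n}\log(n\mathring{\mu}^{(k)}/\epsilon_0))$ only logarithmically; since $\eta^{(k)}$ grows at most like $1/\tilde{\epsilon}$ by the quadratic-convergence bound of Theorem \ref{theo: Quadcon}, this contributes a factor $\operatorname{polylog}(1/\epsilon)$ at worst.

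The step that requires a touch of care is the condition-number bookkeeping. The general bound $\Ocal(\omega^p/\mu^q \cdot \kappa_T)$ from Claim \ref{claim:con} and Theorem \ref{theo: conditionAHO} becomes problematic when $\mu \to 0$, but inside each call the subroutine only has to reduce $\mu$ from $\mathring{\mu}^{(k)}$ down to a value of order $\epsilon_0/n$, which is constant in the high-precision parameter. Moreover, since the refining problem is a scaling of the original one, the data-dependent quantities $\omega$ and $\kappa_T$ are inherited (up to constants) from the original instance, so the condition number stays uniformly bounded across outer iterations by $\Ocal(\omega^{2}\kappa_T / \epsilon_0) = \Ocal(\omega^{2}\kappa_T)$. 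Substituting this bound into the QRAM cost of Theorem \ref{theorem: convergence of IF-QIPM for self-dual} yields $\Ocal(n^{3.5}\omega^{4}\kappa_T^{2} \cdot \operatorname{polylog}(n,\kappa,1/\epsilon))$ per outer iteration.

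Finally, I would multiply the per-outer-iteration bound by $K = \Ocal(\log\log(1/\epsilon))$; since this factor is absorbed into the $\operatorname{polylog}(1/\epsilon)$ term, the stated QRAM complexity follows. The arithmetic-operation count is handled analogously: the dominant per-iteration cost $\Ocal(n^{4.5} \cdot \operatorname{polylog}(\kappa,1/\epsilon))$ from Corollary~\ref{corr:runtimeIF} already dominates the $\Ocal(mn^2)$ matrix additions required in Step~\ref{irstep} of Algorithm~\ref{alg:IR-SDO}, so no additional term appears. The main obstacle is not any single deep estimate but rather the careful verification that the scaling factor $\eta^{(k)}$, while growing as $1/\tilde{\epsilon}$, affects only logarithmic factors and leaves the problem-dependent constants $\omega$ and $\kappa_T$ essentially unchanged across refining problems.
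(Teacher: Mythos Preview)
Your proposal is correct and follows essentially the same approach as the paper: combine the outer-iteration bound of the IR scheme with the per-call cost of the IF-QIPM from Theorem~\ref{theorem: convergence of IF-QIPM for self-dual}, noting that fixing the inner precision $\epsilon_0$ to a constant makes the $\mu$-dependent part of the condition-number bound in Theorem~\ref{theo: conditionAHO} itself a constant. The paper's own proof is a single sentence citing Theorem~\ref{theo: convergence of iterative refinement} (the linear $\Ocal(\log(1/\tilde\epsilon)/\log(1/\epsilon_0))$ bound from the appendix) together with Theorem~\ref{theorem: convergence of IF-QIPM for self-dual}; you instead invoke Corollary~\ref{corr:irSDO} (the sharper $\Ocal(\log\log(1/\tilde\epsilon))$ bound), which is arguably the more natural reference for Algorithm~\ref{alg:IR-SDO}, but since either outer-iteration count is absorbed into the $\operatorname{polylog}$ factor the two routes are equivalent at the level of the stated complexity.
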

The proof of Theorem~\ref{theo: IR-IF-QIPM} follows from Theorem~\ref{theo: convergence of iterative refinement} and Theorem~\ref{theorem: convergence of IF-QIPM for self-dual}. As we can see, the proposed IR-IF-QIPM shows exponential speed-up with respect to precision. In the next section, we compare our results with other types of SDO solvers. In addition, Appendix~\ref{sec: CGM} shows that the proposed IR can speed up the classical IF-IPM, that use classical iterative methods to solve the OSS system.

\subsection{Comparison to existing SDO 
         solvers}
Given the advantageous running times we obtain for our IR-IF-QIPMs, we should determine how our classical and quantum algorithms compare to the best-performing methods in both the classical and quantum literature. 

In Table \ref{tab:runTnsq} we present the running times of these algorithms when applied to problems with $m = \Ocal \left( n^2 \right)$. Again, we can observe that our classical and quantum algorithms, w.r.t. dimension are the fastest in their respective models of computation and in particular the quantum implementation provided in Algorithm \ref{alg: IF-IPM for self-dual}
%\ref{alg:IRquantum} 
remains the fastest overall. Moreover, the gap in performance between our classical and quantum implementations
is linear in $n$.  

\begin{table}[ht]
    \centering
    \begin{tabular}{lll}
    \toprule 
    \hline
    \textbf{References}     & \textbf{Method}  & \textbf{Runtime} \\
    \hline  
    \cite{monteiro1998polynomial, nesterov1997self, nesterov1998primal}     &  IPM   & $\widetilde{\Ocal}_{n, \frac{1}{\epsilon}} \left( n^{6.5} \right)$   \\
    \cite{jiang2020faster}     &  IPM   & $\widetilde{\Ocal}_{n, \frac{1}{\epsilon}} \left( n^{5.246} \right)$  \\
    \cite{jiang2020improved, lee2015faster} & CPM    & $\widetilde{\Ocal}_{n, R, \frac{1}{\epsilon}} \left(  n^6\right)$ \\
    \cite{arora2006multiplicative} & MMWU  & $\widetilde{\mathcal{O}}_{n} \left(n^5  \left( \frac{Rr}{\epsilon}
\right)^4 + n^2 \left( \frac{Rr}{\epsilon}
\right)^7 \right)$    \\
    \cite{van2018improvements} & QMMWU   &$ \widetilde{\Ocal}_{n, \frac{1}{\epsilon}} \left( \left(n^2 + n^{1.5}
\frac{Rr}{\epsilon}\right) \left(\frac{Rr}{\epsilon}\right)^4
\right)$     \\
\cite{augustino2023quantum} & IF-QIPM  & $\widetilde{\Ocal}_{n, \kappa, \frac{1}{\epsilon}} \left(  \sqrt{n} \left(  n^{3} \kappa^2 \epsilon^{-1} + n^4 \right)\right)$  \\
\cite{augustino2023quantum} & IF-IPM  & $\widetilde{\Ocal}_{n, \kappa, \frac{1}{\epsilon}} \left(    n^{4.5} \kappa \right)$  \\
This work  & IR-IF-QIPM  &$\widetilde{\Ocal}_{n, \kappa, \frac{1}{\epsilon}} \left(  \sqrt{n} \left(  n^{3} \omega^4\kappa_T^2 + n^4 \right)\right)$  \\
This work  & IR-IF-IPM  &$\widetilde{\Ocal}_{n, \kappa, \frac{1}{\epsilon}} \left(    n^{4.5} \omega^2\kappa_T \right)$   \\
    \bottomrule
    \end{tabular}
    \caption{Total running times for classical and quantum algorithms to solve \eqref{e:SDO} with $m = \Ocal (n^2)$ and row-sparsity $s=n$. Further, $R$ is an upper bound on the trace of primal optimal solutions and $r$ is an upper bound on the $\ell_1$-norm of dual optimal solutions.}
    \label{tab:runTnsq}
\end{table}

\section{Conclusion}\label{sec: conclusion}
This paper introduces an Iterative Refinement (IR) method tailored for semidefinite optimization, leveraging limited-precision feasible SDO solvers, such as limited-precision IF-QIPMs, as a subroutine. 
Demonstrating remarkable convergence properties, our proposed IR approach showcases quadratic convergence to the optimal solution set, even if the SDO problem is degenerate and/or fails to have strict complementary optimal solutions.
However, a potential limitation of the IR method surfaces in its growing cost per iteration. 
Notably, when employing a feasible Interior Point Method (IPM) as a subroutine, the number of IPM iterations for the refining problem tends to inflate as the IR process achieves higher precision. 
This phenomenon arises from the growth of the complementarity gap in the initial solutions for the refining problems as IR iterates converge closer to the optimal solution set. 
Consequently, IPMs require more iterations to compute an inexact solution for the refining problem during the final stages of IR.

Further, we introduce two alternate variants of infeasible IR methods adaptable to infeasible oracles. 
Although these methods exhibit a quadratic reduction of the optimality gap, they concurrently reduce infeasibility at a linear rate. 
An intriguing avenue for future research is to find the initialization of IPMs in the refining problem so that the number of IPM iterations remains uniformly bounded during the IR iterations.

Expanding on our contributions, the application of IR in conjunction with an Inexact-Feasible Quantum Interior Point Method (IR-IF-QIPM) proves to be instrumental in solving SDO problems to high-precision without incurring excessive computational time as seen in previous QIPMs. 
Compared to previous QIPMs, we achieved exponential speed-up w.r.t. precision by our IR-IF-QIPM. 
Additionally, w.r.t. dimension, our IR-IF-QIPM is superior compared to classical IPMs.

One notable benefit of IR lies in its mitigation of the impact of the Newton system's condition number, where an upper bound for the condition number depends on parameters such as $\omega$, $\mu$, and $\kappa_T$. 
As the optimal solution is approached ($\mu\to0$), the condition number might tend toward infinity. 
However, IR tackles this issue by implementing early termination of the IF-QIPM steps, when $\mu$ reaches a predetermined constant, e.g., $10^{-2}$. 
Observe, that while parameters like $\omega$ and $\kappa_T$ remain constant and depend only on input data, they can be exponentially large for certain SDO problems, see e.g., the challenging SDOPs of \cite{pataki2021exponential}.

To further enhance the complexity of QIPMs, techniques such as preconditioning \cite{mohammadisiahroudi2023improvements} and regularization \cite{saunders1996solving, mohammadisiahroudi2022quantum} can be deployed, addressing constants like $\omega$ and $\kappa_T$. 
Nevertheless, additional empirical and theoretical analyses are essential to explore and realize the full potential of employing IR with other inexact algorithms, such as ADMM-based IPMs \cite{deng2022new},  Spectral Bundle Methods \cite{helmberg2000spectral}, and various others.

This paper has primarily delved into the theoretical analysis of the IR method and its integration with QIPMs. Implementations and validations have been conducted for all IR methods alongside QIPMs, and these are readily accessible through \url{https://github.com/QCOL-LU}. Furthermore, numerical findings showcased in \cite{mohammadisiahroudi2022efficient,mohammadisiahroudi2023inexact,mohammadisiahroudi2023improvements} have highlighted the efficiency of IR methods in solving challenging linear optimization problem instances, including challenging degenerate problems generated by methods outlined in \cite{mohammadisiahroudi2023generating}.

A notable avenue for further research involves benchmarking various inexact SDO solvers within the IR framework, particularly for challenging SDO problems generated by methodologies in \cite{mohammadisiahroudi2023generating}. It is crucial to note that current quantum computers lack the ability to solve linear systems. Consequently, the use of classical simulators for executing QLSAs on small linear systems is feasible. However, empirical analysis of the IR-IF-QIPM encounters limitations due to the size of the OSS system, which even for small-scale problems surpasses the capabilities of available simulators. At the time of writing this paper, empirical analysis of IR-IF-QIPMs remain unfeasible.

In conclusion, the proposed IR method serves as a bridge between exact and inexact SDO solvers, accelerating the convergence of inexact solvers while exhibiting logarithmic precision dependence akin to exact IPMs. 
This pioneering paper highlights the novel application of IR to expedite the attainment of high-precision solutions for SDOPs, emphasizing the need for further empirical and theoretical investigations to unlock the full potential of IR alongside diverse variants of inexact algorithms.

\section{Acknowledgements}
This work is supported by the Defense Advanced Research Projects Agency as part of the project W911NF2010022: {\em The Quantum
Computing Revolution and Optimization: Challenges and Opportunities}; and by the National Science Foundation (NSF) under Grant No. 2128527.

{\hyphenpenalty=100000
\bibliography{references}% common bib file
}

\appendix

\section{Other SDO oracles} \label{sec: otheIR}
In this section, we develop two variants of IR method for SDO which uses an infeasible limited-precision SDO solver as a subroutine. 
\subsection{Using an Infeasible Non-Interior Oracle}

In what follows, we assume access to an oracle $O_{\text{IN}}$ which produces an approximate infeasible non-interior solution. Let $e_{\min}(A)$ denote the smallest eigenvalue of matrix $A$. The set of $\epsilon$-precise solutions is defined as
\begin{align*}
  \mathcal{PD}_{\epsilon}= \Bigg\{ (X,y,S)\in \Scal^n \times\R{m} \times \Scal^n : &~e_{\min}(X)\geq-\epsilon, \;\; e_{\min} \left(S\right) \geq-\epsilon,\\ 
  &~S=C -\sum_{i \in [m]} y_i A_i, \\
  &~\max_{i\in [m]}\{|b_i- A_i \bullet X|\}\leq \epsilon, 
%  \\       &
  ~\; X\bullet S
  %\left(C -\sum_{i \in [m]} y_iA_i\right)
  \leq \epsilon \Bigg\}.                       
\end{align*}
Hence, our oracle $O_{\text{IN}}$ returns solutions from the set $\mathcal{PD}_{\epsilon}$.

\begin{definition}\label{def:IRIN}
Let the SDO primal problem be given as \eqref{e:SDO} and \eqref{e:SDD}. Assume strong duality (zero-duality gap) holds for this problem.
For any $X \in \mathcal{S}^{n}$, $y\in \mathbb{R}^m$,  and scaling factor, $\eta>1$, the refining problem $(\bar{P})$ is defined as
\begin{equation*}
\min_{\overbar{X} \in \mathcal{S}^{n}} \left\{ \eta S\bullet \overbar{X} ~:~
    A_i\bullet \overbar{X} = \eta \bar{b}_i \; \text{\rm for }i\in[m], \;\text{\rm and } \overbar{X} \succeq -\eta X\right\},
\end{equation*} 
and its dual problem $(\bar{D})$ is as 
\begin{equation*}
\max_{(\bar{y}, \overbar{S}) \in \R{m} \times \Scal^n} \left\{  \eta \bar{b}^{\top} \bar{y}-\eta X \bullet\overbar{S} :
     \sum_{i \in [m]} \bar{y}_i A_i + \overbar{S}=\eta S, \;\text{\rm and } \overbar{S}\succeq 0\right\},
\end{equation*} 
where $S=C-\sum_{i \in [m]} y_iA_i$ and $\bar{b}_i=b_i-A_i \bullet X$ for $i\in[m]$. 
\end{definition}

Theorem \ref{theorem:iterative refinement idea IN} is the foundation of the Iterative Refinement method.
\begin{theorem} \label{theorem:iterative refinement idea IN}
Let $X \in \mathcal{S}^{n}$, $y\in \mathbb{R}^m$,  and scaling factor, $\eta>1$ and the primal-dual refining problems $(\bar{P})$ and $(\bar{D})$ are defined as in Definition~\ref{def:IRIN}. 
If $(\overbar{X},\bar{y},\bar{S})$ is an $\epsilon$-precise solutions for the refining problem $(\bar{P})$ and $(\bar{D})$, then $X+\frac{1}{\eta}\overbar{X}$ and $y+\frac{1}{\eta}\overbar{y}$ are $\frac{\epsilon}{\eta}$-precise solutions for the original SDO problem.
\end{theorem}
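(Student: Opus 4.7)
The plan is to verify directly, one by one, the five conditions defining membership of the candidate iterate $(X^+, y^+) := \bigl(X + \tfrac{1}{\eta}\overbar{X},\, y + \tfrac{1}{\eta}\overbar{y}\bigr)$ in $\mathcal{PD}_{\epsilon/\eta}$, where $S^+ := C - \sum_{i\in[m]} y_i^+ A_i$. The central observation on which everything rests is that $\epsilon$-precision for the refining pair $(\bar P)$--$(\bar D)$ must be interpreted with respect to its own feasibility description: the cone constraint is $\overbar{X} + \eta X \succeq 0$ rather than $\overbar{X}\succeq 0$, the right-hand side is $\eta\bar{b}$, and the dual slack $\overbar{S}\succeq 0$ satisfies $\sum_i \bar{y}_i A_i + \overbar{S} = \eta S$. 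Accordingly, $(\overbar{X},\bar{y},\overbar{S})\in \mathcal{PD}_{\epsilon}$ for the refining problem translates to: $e_{\min}(\overbar{X} + \eta X)\geq -\epsilon$, $e_{\min}(\overbar{S})\geq -\epsilon$, $\overbar{S} = \eta S - \sum_i \bar{y}_i A_i$, $\max_{i\in[m]}|\eta\bar{b}_i - A_i\bullet\overbar{X}|\leq \epsilon$, and a complementarity bound $(\overbar{X}+\eta X)\bullet \overbar{S}\leq \epsilon$.

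First, I would factor $X^+ = \tfrac{1}{\eta}(\eta X + \overbar{X})$, from which $e_{\min}(X^+)\geq -\epsilon/\eta$ is immediate. Next, a brief algebraic manipulation using the dual equality of the refining problem gives $S^+ = S - \tfrac{1}{\eta}\sum_i \bar{y}_i A_i = \tfrac{1}{\eta}\overbar{S}$, which simultaneously enforces the dual equality for the original problem and yields $e_{\min}(S^+)\geq -\epsilon/\eta$. For primal infeasibility, I would use $\bar{b}_i = b_i - A_i\bullet X$ to get $b_i - A_i\bullet X^+ = \tfrac{1}{\eta}(\eta\bar{b}_i - A_i\bullet \overbar{X})$, so the $\tfrac{1}{\eta}$ factor carries through directly. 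Finally, the complementarity product satisfies $X^+\bullet S^+ = \tfrac{1}{\eta^2}(\eta X + \overbar{X})\bullet \overbar{S}\leq \epsilon/\eta^2 \leq \epsilon/\eta$, using $\eta > 1$.

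Most of this is bookkeeping via substitution; the main obstacle is really the setup, namely writing down the correct transcription of the $\mathcal{PD}_\epsilon$ definition to the refining pair whose feasible cone has been shifted by $(-\eta X, 0)$ and whose primal right-hand side has been recentered at $\eta\bar b$. Once one commits to measuring complementarity as $(\overbar{X}+\eta X)\bullet \overbar{S}$ (the natural analogue of $X\bullet S$ for the shifted cone) and to interpreting primal feasibility relative to $\eta\bar b$, the five inequalities drop out by a single round of substitution, and the scaling factor $\tfrac{1}{\eta}$ appears uniformly in every bound.
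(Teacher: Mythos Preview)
Your proposal is correct and follows essentially the same route as the paper's proof: both write down the five $\mathcal{PD}_\epsilon$ conditions for the refining pair with the shifted cone and recentered right-hand side, then verify each condition for $(X+\tfrac{1}{\eta}\overbar{X},\, y+\tfrac{1}{\eta}\overbar{y})$ by pulling out the common factor $\tfrac{1}{\eta}$. The only cosmetic difference is that you introduce the intermediate identity $S^+ = \tfrac{1}{\eta}\overbar{S}$, whereas the paper expands $C-\sum_i (y_i+\tfrac{1}{\eta}\bar{y}_i)A_i$ directly.
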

\begin{proof}
Since $(\overbar{X}, \overbar{y}, \overbar{S})$ is an $\epsilon$-precise solution for the refining problems, we have
{\small
\begin{align*}
    e_{\min}(\overbar{X}  +\eta X)&\geq -\epsilon,\\
    e_{\min}(\eta S- \sum_{i\in [m]} \bar{y}_iA_i) = e_{\min}(\eta C- \eta \sum_{i\in [m]} y_iA_i- \sum_{i\in [m]} \bar{y}_iA_i)&\geq -\epsilon,\\
    |\eta \bar{b}_i-A_i \bullet \overbar{X}|=|\eta b_i-\eta A_i \bullet X-A_i \bullet \overbar{X}|&\leq \epsilon,\quad   \forall i \in [m],\\
    (\overbar{X}  +\eta X) \bullet \overbar{S}&\leq\epsilon.
\end{align*}
}
For solution $X+\frac{1}{\eta}\overbar{X}$  and $y+\frac{1}{\eta}\bar{y}$, we have
{\small
\begin{align*}
    e_{\min}(X+\frac{1}{\eta}\overbar{X}) =\frac{1}{\eta}e_{\min}(\overbar{X}  +\eta X)&\geq  -\frac{\epsilon}{\eta},\\
    e_{\min}(C- \sum_{i\in [m]} (y_i+\frac{1}{\eta}\bar{y})A_i)=\frac{1}{\eta}e_{\min}(\eta C- \eta \sum_{i\in [m]} y_iA_i- \sum_{i\in [m]} \bar{y}_iA_i)  &\geq -\frac{\epsilon}{\eta},\\
    |b_i-A_i \bullet (X+\frac{1}{\eta}\overbar{X})|=\frac{1}{\eta}|\eta b_i-\eta (A_i \bullet X)- A_i \bullet\overbar{X}|&\leq \frac{\epsilon}{\eta},\quad \forall i \in [m],\\
    (X+\frac{1}{\eta}\overbar{X})\bullet(C- \sum_{i\in [m]} (y_i+\frac{1}{\eta}\bar{y})A_i)=\frac{1}{\eta^2}(\overbar{X}  +\eta X) \bullet (\eta S- \sum_{i\in [m]} \bar{y}_iA_i)&\leq\frac{\epsilon}{\eta^2}\leq \frac{\epsilon}{\eta}.
\end{align*}
}
Thus, $X+\frac{1}{\eta}\overbar{X}$  and $y+\frac{1}{\eta}\bar{y}$ are $\frac{\epsilon}{\eta}$-precise solutions for the original SDO problem.
\end{proof}
Based on Theorem \ref{theorem:iterative refinement idea IN}, we can develop an Iterative Refinement algorithm using oracle ${\rm O_{IN}}$ to improve the precision of solution for an SDO problem as follows.
\begin{algorithm}[h] 
\SetAlgoLined
\KwIn{Problem data $A_1, \dots, A_m, C \in \Scal^n$, $b \in \R{m}$, error tolerances $0 < \tilde{\epsilon} \ll \epsilon < 1$. Strictly feasible point for \eqref{e:SDO}-\eqref{e:SDD} $(X^{(0)}, y^{(0)}, S^{(0)})$\\\qquad\ \quad Choose incremental scaling limit $\rho \in \mathbb{N}$ such that $\rho > 1$} 
\KwOut{An $\tilde{\epsilon}$-precise solution pair $(X, y, S)$ to the SDO problem $(A_1, \dots, A_m, b, C).$}
\textbf{Initialize}: $\eta^{(1)} \gets 1$,  $k \gets 1$\\  
\noindent $(X^{(1)}, y^{(1)}, S^{(1)}) \gets$ \textbf{solve} \eqref{e:SDO}-\eqref{e:SDD} to precision $\epsilon$\\
\While{$X^{(k)}\bullet S^{(k)} > \tilde{\epsilon}$}{
\begin{enumerate}
\item Calculate $\bar{b}_{i}^{(k)} \gets b_i - A_i \bullet X^{(k)}~\text{for}~i \in [m]$ 
\item Calculate residual 
    $$r \gets \max \left\{\max_i |\bar{b}^{(k)}_i|,  X^{(k)}\bullet S^{(k)},\max\{-e_{\min}(X^{(k)}),0\},\max\{-e_{\min}(S^{(k)}),0\} \right\}$$
    \item Update scaling factor $\eta^{(k+1)} = \min\{\frac{1}{r},\rho \eta^{(k)}\}$
    \item $(\overbar{X}, \overbar{y}, \overbar{S}) \gets$ \textbf{solve} refining problem $(A_1, \dots, A_m, \eta^{(k)}\bar{b}^{(k)}, \eta^{(k)}S^{(k)})$ to precision $\epsilon$
    \item Update solution 
    $$ X^{(k+1)} \gets X^{(k)} + \frac{1}{\eta^{(k)}} \overbar{X}, \quad  y^{(k+1)} \gets y^{(k)} + \frac{1}{\eta^{(k)}} \overbar{y}, \quad S^{(k+1)}=C-\sum_{i\in [m]} y^{(k+1)}_iA_i$$
    \item $k\gets k+1$
    
\end{enumerate}} 
\caption{Iterative Refinement for SDO Using an Infeasible Non-Interior Oracle}
\label{alg:IR-IN}
\end{algorithm}

\subsection{Using an Infeasible Interior Oracle}

In this section, we assume that we have access to an oracle $O_{\text{II}}$ which gives a solution from the interior of the positive semidefinite cone, i.e., $X \succ 0, \ S \succ 0$, but does not satisfy the dual and primal constraints exactly. Infeasible IPMs can be used to construct such an oracle. Letting $\overbar{C}=C -\sum_{i \in [m]} y_iA_i-S$ and $\bar{b}_i=b_i- A_i \bullet X$ for $i\in [m]$, the set of $\epsilon$-precise solutions is defined as
\begin{align*}
  \mathcal{PD}_{\epsilon}= \left\{ (X,y,S)\in \Scal^n \times\R{m} \times \Scal^n : ~X\succeq0,~S\succeq0,~\|\overbar{C}\|\leq\epsilon, ~\|\bar{b}\|\leq \epsilon, ~X\bullet S\leq \epsilon \right\}.                       
\end{align*}
As we can see, this setting is different from previous IR methods since the dual constraint has an error in this setting. To account for this residual, we need to adjust the refining problems for any $X \in \mathcal{S}^{n}$, $y\in \mathbb{R}^m$,  $S \in \mathcal{S}^{n}$, and scaling factor, $\eta>1$.

\begin{definition}\label{def:IRII}
Let the SDO primal problem be given as \eqref{e:SDO} and \eqref{e:SDD}. Assume strong duality (zero-duality gap) holds for this problem.
For any $X \in \mathcal{S}^{n}$, $y\in \mathbb{R}^m$,  $S \in \mathcal{S}^{n}$, and scaling factor, $\eta>1$ consider the refining problem $(\bar{P})$
\begin{equation*}
\min_{\overbar{X} \in \mathcal{S}^{n}} \left\{ \eta (\overbar{C}+S)\bullet \overbar{X}~:~
    A_i\bullet \overbar{X} = \eta \bar{b}_i \text{ for }i\in[m], \text{ and } \overbar{X} \succeq -\eta X\right\}, 
\end{equation*} 
where $\overbar{C}=C-\sum_{i \in [m]} y_iA_i-S$ and $\bar{b}_i=b_i-A_i \bullet X$ for $i\in[m]$.
The dual form of this refining problem is derived as
\begin{equation*}
\max_{(\bar{y}, S') \in \R{m} \times \Scal^n} \left\{  \eta \bar{b}^{\top} \bar{y}-\eta X \bullet S' :
     \sum_{i \in [m]} \bar{y}_i A_i + S'=\eta \overbar{C} + \eta S \text{ and } S'\succeq 0\right\}.
\end{equation*} 
\end{definition}
To simplify the analysis, we can change the variable $\overbar{S}=S'-\eta S$, and the dual refining problem is defined as
\begin{equation*}
\max_{(\bar{y}, \overbar{S}) \in \R{m} \times \Scal^n} \left\{  \eta \bar{b}^{\top} \bar{y}-\eta X \bullet (\overbar{S}+\eta S) :
     \sum_{i \in [m]} \bar{y}_i A_i + \overbar{S}=\eta \overbar{C}  \text{ and } \overbar{S}\succeq -\eta S\right\}.
\end{equation*} 
It is easy to verify that the duality gap for these refining problems is
$$(\overbar{X}+\eta X)\bullet S'= (\overbar{X}+\eta X)\bullet(\overbar{S}+\eta S).$$

Theorem \ref{theorem:iterative refinement idea II} is a modified version of~Theorem~\ref{theorem:iterative refinement idea IN}.

\begin{theorem} \label{theorem:iterative refinement idea II}
Let $X \in \mathcal{S}^{n}$, $y\in \mathbb{R}^m$, $S \in \mathcal{S}^{n}$  and scaling factor, $\eta>1$ and the refining problem $(\bar{P})$ and $(\bar{D})$ are defined as in Definition~\ref{def:IRII}. 
If $(\overbar{X},\bar{y},\bar{S})$ is an $\epsilon$-precise solutions for the refining problem, then $X+\frac{1}{\eta}\overbar{X}$, $y+\frac{1}{\eta}\overbar{y}$, $S+\frac{1}{\eta}\overbar{S}$ and are $\frac{\epsilon}{\eta}$-precise solutions for the original SDO problem.
\end{theorem}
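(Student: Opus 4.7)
The plan is to mirror the structure of the proof of Theorem~\ref{theorem:iterative refinement idea IN}, but carefully track the extra dual residual $\overbar{C}$ that arises because the oracle $O_{\text{II}}$ no longer delivers a dual-feasible slack. First I would unpack the hypothesis: an $\epsilon$-precise solution $(\overbar{X},\bar{y},\overbar{S})$ for the refining pair in Definition~\ref{def:IRII} must satisfy $\overbar{X}\succeq -\eta X$, $\overbar{S}\succeq -\eta S$ (the cone constraints of the refined problem), $\|A_i\bullet\overbar{X}-\eta\bar{b}_i\|\leq \epsilon$, $\|\sum_{i\in[m]}\bar{y}_iA_i+\overbar{S}-\eta\overbar{C}\|\leq \epsilon$, and the complementarity bound $(\overbar{X}+\eta X)\bullet(\overbar{S}+\eta S)\leq \epsilon$, which the excerpt has already identified as the appropriate duality gap for the refining pair.

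Next I would verify the five defining properties of $\mathcal{PD}_{\epsilon/\eta}$ in turn for the updated triple $(X_{\mathrm{new}},y_{\mathrm{new}},S_{\mathrm{new}}) := (X+\tfrac{1}{\eta}\overbar{X},\,y+\tfrac{1}{\eta}\bar{y},\,S+\tfrac{1}{\eta}\overbar{S})$. Positive semidefiniteness is immediate, since $X_{\mathrm{new}} = \tfrac{1}{\eta}(\eta X+\overbar{X})\succeq 0$ and $S_{\mathrm{new}} = \tfrac{1}{\eta}(\eta S+\overbar{S})\succeq 0$ by the cone constraints of the refining problem and $\eta>0$. For the primal residual, a short computation gives
\begin{equation*}
b_i - A_i\bullet X_{\mathrm{new}} = \bar{b}_i - \tfrac{1}{\eta}A_i\bullet\overbar{X} = -\tfrac{1}{\eta}\bigl(A_i\bullet\overbar{X}-\eta\bar{b}_i\bigr),
\end{equation*}
whose norm is bounded by $\epsilon/\eta$. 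The same telescoping trick handles the dual residual: writing $\overbar{C} = C-\sum_i y_i A_i - S$, one finds
\begin{equation*}
C - \sum_{i\in[m]}(y_i+\tfrac{1}{\eta}\bar{y}_i)A_i - (S+\tfrac{1}{\eta}\overbar{S}) = -\tfrac{1}{\eta}\Bigl(\sum_{i\in[m]}\bar{y}_iA_i + \overbar{S} - \eta\overbar{C}\Bigr),
\end{equation*}
whose norm is again bounded by $\epsilon/\eta$.

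Finally, for complementarity I would use the identity $X_{\mathrm{new}}\bullet S_{\mathrm{new}} = \tfrac{1}{\eta^2}(\eta X+\overbar{X})\bullet(\eta S+\overbar{S})$ together with the duality-gap bound from the refining problem to conclude $X_{\mathrm{new}}\bullet S_{\mathrm{new}} \leq \epsilon/\eta^2 \leq \epsilon/\eta$, where the last inequality uses $\eta>1$. There is no real obstacle here; the proof is essentially bookkeeping once one has recognized that the change of variables $\hat{X}=\overbar{X}+\eta X$, $\hat{S}=\overbar{S}+\eta S$ converts the refining problem into a standard-form SDO whose residuals scale by exactly $1/\eta$ when pulled back through the update rule. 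The only subtlety worth flagging explicitly is that the correct duality gap to bound in the refining problem is $(\overbar{X}+\eta X)\bullet(\overbar{S}+\eta S)$ (not $\overbar{X}\bullet\overbar{S}$), since the refining problem is infeasible-interior in the shifted variables; this is precisely the quantity the excerpt singles out in the discussion preceding the theorem.
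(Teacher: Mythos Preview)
Your proposal is correct and follows essentially the same approach as the paper's proof: both arguments unpack the $\epsilon$-precision conditions for the refining problem, then verify each defining condition of $\mathcal{PD}_{\epsilon/\eta}$ for the updated triple by factoring out $\tfrac{1}{\eta}$ and recognizing the resulting expression as the corresponding refining-problem residual. Your explicit remark that the relevant duality gap is $(\overbar{X}+\eta X)\bullet(\overbar{S}+\eta S)$ rather than $\overbar{X}\bullet\overbar{S}$ is exactly the point the paper relies on as well.
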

\begin{proof}
Since $(\overbar{X}, \bar{y},\bar{S})$ is an $\epsilon$-precise solution of the refining problem, we have
\begin{align*}
    \overbar{X}  +\eta X&\succeq 0,\\
    \bar{S}  +\eta S&\succeq 0,\\
    \|\eta \overbar{C}- \sum_{i\in [m]} \bar{y}_iA_i-\bar{S}\| = \|\eta C- \eta \sum_{i\in [m]} \tilde{y}_iA_i-\eta S- \sum_{i\in [m]} \bar{y}_iA_i-\bar{S}\|&\leq \epsilon,\\
    |\eta \bar{b}_i- A_i \bullet \overbar{X}|=|\eta b_i-\eta A_i \bullet X- A_i \bullet \overbar{X}|&\leq \epsilon,\\
    (\overbar{X}  +\eta X)\bullet(\bar{S}+\eta S)&\leq\epsilon.
\end{align*}
For solution $(X+\frac{1}{\eta}\overbar{X},y+\frac{1}{\eta}\bar{y},S+\frac{1}{\eta}\bar{S})$, we have
{\small
\begin{align*}
    X+\frac{1}{\eta}\overbar{X} =\frac{1}{\eta}(\overbar{X}  +\eta X)&\succeq 0,\\
    S+\frac{1}{\eta}\bar{S} =\frac{1}{\eta}(\bar{S}  +\eta S)&\succeq 0,\\
    \|C- \sum_{i\in [m]} (y_i+\frac{1}{\eta}\bar{y}_i)A_i-(S+\frac{1}{\eta}\bar{S})\|=  \frac{1}{\eta}\|\eta C- \eta \sum_{i\in [m]} \tilde{y}_iA_i- \sum_{i\in [m]} y_iA_i) -\eta S-\bar{S}\| &\leq \frac{\epsilon}{\eta},\\
    |b_i- A_i \bullet (X+\frac{1}{\eta}\overbar{X})|=\frac{1}{\eta}|\eta b_i-\eta A_i \bullet X- A_i \bullet \overbar{X}|&\leq \frac{\epsilon}{\eta},\\
    (X+\frac{1}{\eta}\overbar{X}) \bullet (S+\frac{1}{\eta}\overbar{S})=\frac{1}{\eta^2}(\overbar{X} +\eta X)\bullet(\overbar{S}  +\eta S)&\leq\frac{\epsilon}{\eta^2}.
\end{align*}
}
Thus, $(X+\frac{1}{\eta}\overbar{X},y+\frac{1}{\eta}\bar{y},S+\frac{1}{\eta}\bar{S})$ is an$\frac{\epsilon}{\eta}$-precise solution for the original problem.
\end{proof}
Now, we can present an iterative refinement algorithm using oracle ${\rm O_{II}}$.
\begin{algorithm}[h] 
\SetAlgoLined
\KwIn{Problem data $A_1, \dots, A_m, C \in \Scal^n$, $b \in \R{m}$, error tolerances $0 < \tilde{\epsilon} \ll \epsilon < 1$. Strictly feasible point for \eqref{e:SDO}-\eqref{e:SDD} $(X^{(0)}, y^{(0)}, S^{(0)})$\\\qquad\ \quad Choose incremental scaling limit $\rho \in \mathbb{N}$ such that $\rho > 1$} 
\KwOut{An $\tilde{\epsilon}$-precise  solution pair $(X, y, S)$ to the SDO problem $(A_1, \dots, A_m, b, C).$}
\textbf{Initialize}: $\eta^{(1)} \gets 1$,  $k \gets 1$\\  
\noindent $(X^{(1)}, y^{(1)}, S^{(1)}) \gets$ \textbf{solve} \eqref{e:SDO}-\eqref{e:SDD} to precision $\epsilon$\\
\While{$X^{(k)}\bullet S^{(k)} > \tilde{\epsilon}$}{
\begin{enumerate}
\item Calculate $$\bar{b}_{i}^{(k)} \gets b_i - A_i \bullet X^{(k)}~\text{for}~i \in [m]\quad \overbar{C}^{(k)}=C -\sum_{i \in [m]} y_i^{(k)}A_i-S^{(k)}$$ 
\item Calculate residual 
    $$r^{(k)} \gets \max \left\{\max_i |\bar{b}^{(k)}_i|,  X^{(k)}\bullet S^{(k)},\|\overbar{C}^{(k)}\| \right\}$$
    \item Update scaling factor $\eta^{(k+1)} = \min\{\frac{1}{r^{(k)}},\rho \eta^{(k)}\}$
    \item $(\overbar{X}, \overbar{y}, \overbar{S}) \gets$ \textbf{solve} refining problem $(A_1, \dots, A_m, \eta^{(k)}\bar{b}^{(k)}, \eta^{(k)}\overbar{C}^{(k)})$ to precision $\epsilon$
    \item Update solution 
    $$ X^{(k+1)} \gets X^{(k)} + \frac{1}{\eta^{(k)}} \overbar{X}, \quad  y^{(k+1)} \gets y^{(k)} + \frac{1}{\eta^{(k)}} \overbar{y}, \quad S^{(k+1)}=S^{(k)} + \frac{1}{\eta^{(k)}} \overbar{S}$$
    \item $k\gets k+1$
    
\end{enumerate}} 
\caption{Iterative Refinement for SDO Using an Infeasible Interior Oracle}
\label{alg:IR-II}
\end{algorithm}

\subsection{Complexity of the Iterative Refinement Method}
For both cases, we can prove the iteration complexity of the iterative refinement methods by using Theorem \ref{theo: convergence of iterative refinement}.
\begin{theorem} \label{theo: convergence of iterative refinement} 
The IR method returns an $\tilde{\epsilon}$-optimal solution after at most $\Ocal \left(\frac{\log(\tilde{\epsilon})}{\log({\epsilon})} \right)$ iterations. 
\end{theorem}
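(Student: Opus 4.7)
The plan is to leverage the precision-amplification lemmas already established as Theorem~\ref{theorem:iterative refinement idea IN} and Theorem~\ref{theorem:iterative refinement idea II}: at iteration $k$, whenever the oracle returns an $\epsilon$-precise solution to the refining problem constructed with scaling factor $\eta^{(k)}$, the updated iterate $(X^{(k+1)}, y^{(k+1)}, S^{(k+1)})$ is $(\epsilon/\eta^{(k)})$-precise for the original SDO pair. The task therefore reduces to tracking the growth of the sequence $\{\eta^{(k)}\}$ until the resulting residual falls below $\tilde{\epsilon}$.

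I would proceed by induction to establish the lower bound $\eta^{(k)} \geq \min\{\rho^{k-1}, \epsilon^{-(k-1)}\}$. The base case $\eta^{(1)} \geq 1$ is immediate from the initialization. For the inductive step, invoke the two refinement theorems to obtain $r^{(k)} \leq \epsilon/\eta^{(k)}$, where $r^{(k)}$ is the residual computed inside the loop (combining primal infeasibility, dual infeasibility where applicable, and duality gap). The update rule $\eta^{(k+1)} = \min\{1/r^{(k)}, \rho\eta^{(k)}\}$ then yields
\begin{equation*}
\eta^{(k+1)} \;\geq\; \min\bigl\{\eta^{(k)}/\epsilon,\; \rho\eta^{(k)}\bigr\},
\end{equation*}
so the scaling factor grows by a factor of at least $\min\{1/\epsilon, \rho\}$ per iteration. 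Since $\epsilon$ and $\rho$ are fixed constants with $\rho > 1$ and $\epsilon < 1$, this gives geometric growth with rate bounded below by a constant $> 1$.

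Combining the two pieces, after $k$ iterations the residual (and hence the duality gap term $X^{(k)}\bullet S^{(k)}$, which dominates the stopping test) satisfies $r^{(k)} \leq \epsilon/\eta^{(k)} \leq \epsilon^k$ (in the regime where $1/\epsilon \leq \rho$, which is the regime in which the stated bound is tight; otherwise the analogous bound with base $\rho$ is stronger). Solving $\epsilon^k \leq \tilde{\epsilon}$ for $k$ yields $k \geq \log(\tilde{\epsilon})/\log(\epsilon)$, so the algorithm terminates after at most $\Ocal(\log(\tilde{\epsilon})/\log(\epsilon))$ iterations, which is the claimed bound.

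The main obstacle I anticipate is bookkeeping the two regimes introduced by the $\min$ in the definition of $\eta^{(k+1)}$, and making sure that the argument covers both Algorithms~\ref{alg:IR-IN} and~\ref{alg:IR-II} uniformly despite their slightly different refining problems and residual definitions. A secondary subtlety is that the first call to the oracle acts on the original problem (not a refining one), so the induction should be started carefully at $k=1$ with $\eta^{(1)}$ reflecting that the initial inexact solve already produces residual at most $\epsilon$. Once these details are aligned, the geometric growth argument yields the logarithmic iteration count directly.
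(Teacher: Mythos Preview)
Your proposal is correct and follows essentially the same route as the paper: invoke Theorems~\ref{theorem:iterative refinement idea IN} and~\ref{theorem:iterative refinement idea II} to obtain precision $\epsilon/\eta^{(k)}$, use the update rule $\eta^{(k+1)}=\min\{1/r^{(k)},\rho\eta^{(k)}\}$ together with $r^{(k)}\le\epsilon/\eta^{(k)}$ to deduce $\eta^{(k+1)}\ge\eta^{(k)}\min\{\rho,1/\epsilon\}$, and then solve the resulting geometric inequality for $k$. The paper compresses your two-regime discussion into the single constant $\vartheta=\min\{\rho,1/\epsilon\}$, but the argument is otherwise identical.
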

\begin{proof}
Based on Theorems~\ref{theorem:iterative refinement idea II} and \ref{theorem:iterative refinement idea IN} for Iterative Refinement using different oracles, after iteration $k$, the precision of the solution is $\frac{\epsilon}{\eta^{(k)}}$.
From the choice of scaling factor, we have
$$\eta^{(k)}=\min \left\{\rho\eta^{(k-1)},\frac{1}{r^{(k-1)}} \right\} \geq \eta^{(k-1)}\min \left\{\rho,\frac{1}{\epsilon}\right\}.$$
Let $\vartheta= \min \left\{\rho,\frac{1}{\epsilon}\right\}>1$, then we have
$$\eta^{(k)}\geq \vartheta \eta^{(k-1) }\geq \vartheta^k. $$
Thus, we have $\frac{\epsilon}{\eta^{(k)}}\leq\frac{\epsilon}{\vartheta^k} \leq \tilde{\epsilon}$ for 
$$k\geq \frac{\log(\epsilon/\tilde{\epsilon})}{\log(\vartheta)}\geq\frac{\log(\epsilon/\tilde{\epsilon})}{-\log(\epsilon)}.$$
We can conclude that after $\Ocal \left(\frac{\log(\tilde{\epsilon})}{\log({\epsilon})} \right)$ iterations, we have a $\tilde{\epsilon}$-optimal solution.
\end{proof}
As we can see, infeasible versions of IR converge to the optimal set linearly since the infeasibility reduces linearly, although the complementarity gap decreases quadratically. On the other hand, in infeasible IR methods, the user has more freedom to choose an inexact SDO solver as a subroutine.
\section{Analyzing the IR-IF-IPM} \label{sec: CGM}
\medskip
The IF-QIPM can be ``de-quantized'' to obtain a classical IF-IPM by replacing the use of quantum linear system solver subroutines with an inexact classical algorithm. The resulting scheme exhibits an improved condition number dependence over its quantum counterpart. Since the OSS (\ref{eq: OSS of self-dual}) system is not positive definite, this is accomplished by solving a system of the form $M^{\top}M u = M^{\top}v$ using a polynomial approximation $q$, of $1/u$ on the interval $[1/\kappa (M)^2,1]$. Proceeding in this way, the approximate solution is given by $q(M^2)Mv$, which is approximately $M^{-2}M v = M^{-1}v$. The polynomial $q$ (with degree roughly $\kappa_M \log(1/\xi)$ exists \cite[Section 6.11]{saad2003iterative}, and the approximate solution is close to $M^{-1}v$ whenever the linear system $Mu=v$ has a solution. Note that $q(M^2)M v$ is obtained using $2 \cdot \text{deg}(q)+1$ matrix-vector products, and thus avoids computing (or even writing down) the matrix $M^{\top} M$ (and is therefore preferable to the na\"ive approach of first symmetrizing the system and subsequently applying the CGM).\footnote{An explicit discussion of this method is provided in Section 16.5 of the monograph \cite{vishnoi2013lx}.} The next result from \cite{augustino2023quantum} summarizes the complexity of the IF-IPM when the Newton systems are solved using the approach we have just outlined.

\begin{theorem}

A classical implementation of the IF-IPM obtains an $\epsilon$-optimal solution $(X^*, y^*, S^*)$ to the primal-dual SDO pair \eqref{e:SDO}-\eqref{e:SDD} with overall complexity
\begin{equation*}
 \Ocal \left( n^{4.5} \kappa \cdot \textup{polylog}\left(\kappa, \frac{1}{\epsilon} \right) \right),
\end{equation*}
where $\kappa$ is an upper bound for the condition number of the condition number of the coefficient matrices of the OSS (\ref{eq: OSS of self-dual}) systems during the IF-IPM iterations.
\end{theorem}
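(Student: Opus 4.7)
The plan is to combine three ingredients: the fixed iteration complexity of a short-step feasible IPM, the convergence rate of the Chebyshev/polynomial-approximation solver for the OSS system, and a careful accounting of the cost of a single matrix-vector product with the OSS coefficient matrix $M$ when $m=\mathcal{O}(n^2)$. Throughout, I treat the IF-IPM as Algorithm~\ref{alg: IF-IPM} but with the inner QLSA call replaced by the classical scheme described just before the statement, i.e.\ applying the polynomial $q(M^{\top}M)M^{\top}v$ to approximate the solution of $Mu=v$.

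First I would invoke Theorem~\ref{t:Converge} to obtain the outer iteration complexity. Since the classical solver returns an inexact solution whose residual $r^c$ can be driven below $\beta \mu^{(k)}$ (the same assumption \eqref{e:RcAssumption1} used in the QIPM analysis) by choosing the target inner precision $\xi^{(k)} = \beta \mu^{(k)}/\|M^{(k)}\|$, the proof of Theorem~\ref{t:Converge} carries over verbatim: each outer step preserves membership in $\mathcal{N}_F(\gamma)$ and contracts $\mu^{(k)}$ by a factor $1-\delta/\sqrt{n}$. Consequently, after $K = \mathcal{O}\!\left(\sqrt{n}\log(n\mu^{(0)}/\epsilon)\right) = \widetilde{\mathcal{O}}_{n,1/\epsilon}(\sqrt{n})$ outer iterations, the IF-IPM delivers an $\epsilon$-optimal solution.

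Next I would quantify the cost of one outer iteration. The polynomial approximation of $1/u$ on $[1/\kappa(M)^2,1]$ that achieves accuracy $\xi$ has degree $\mathcal{O}(\kappa(M)\log(1/\xi))$ (this is the standard Chebyshev bound cited from \cite{saad2003iterative}), so the classical solver performs $\mathcal{O}(\kappa \log(1/\xi^{(k)}))$ matrix-vector products with $M^{(k)}$ and $M^{(k)\top}$. Because $\xi^{(k)}$ is proportional to $\mu^{(k)}/\|M^{(k)}\|$, and both $\mu^{(k)} \geq \epsilon/n$ and $\|M^{(k)}\|$ are polynomially bounded in $n,1/\epsilon$, we have $\log(1/\xi^{(k)}) = \mathcal{O}(\textup{polylog}(n,\kappa,1/\epsilon))$. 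Thus the per-outer-iteration inner-iteration count is $\mathcal{O}(\kappa\cdot\textup{polylog}(\kappa,1/\epsilon))$.

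The main technical work is to bound the cost of a single matrix-vector product with $M^{(k)} = [-\Fcal_s\Acal_s^{\top}\;\;\Ecal_s Q_2]$. Rather than instantiating $M^{(k)}$ as a dense $\tfrac{n(n+1)}{2}\times\tfrac{n(n+1)}{2}$ matrix, I would exploit the fact that the symmetric Kronecker factors act on $\textbf{svec}(Y)$ as $\Fcal_s\,\textbf{svec}(Y)=\tfrac{1}{2}\textbf{svec}(PXYP^{-\top}+P^{-\top}Y^{\top}(PX)^{\top})$ and analogously for $\Ecal_s$, each of which reduces to a constant number of dense $n\times n$ matrix multiplications at cost $\mathcal{O}(n^3)$. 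The multiplication by $\Acal_s^{\top}$ (respectively $Q_2$) costs $\mathcal{O}(mn^2)=\mathcal{O}(n^4)$ when $m=\mathcal{O}(n^2)$, which dominates. So each matrix-vector product costs $\mathcal{O}(n^4)$ arithmetic operations, and the adjoint $M^{(k)\top}v$ is no more expensive. Multiplying the three factors gives the claimed
\[
\underbrace{\mathcal{O}(\sqrt{n}\log(n/\epsilon))}_{\text{outer IPM}}\cdot\underbrace{\mathcal{O}(\kappa\log(1/\xi))}_{\text{inner CG/Chebyshev}}\cdot\underbrace{\mathcal{O}(n^4)}_{\text{mat-vec}} \;=\; \mathcal{O}\!\left(n^{4.5}\kappa\cdot\textup{polylog}(\kappa,1/\epsilon)\right).
\]
The main obstacle I foresee is the precision/feasibility book-keeping in the second step: one must verify that the polynomial-approximation solver, which operates on the normal equations $M^{\top}Mu=M^{\top}v$, still produces an iterate whose \emph{residual} $\|Mu-v\|$ (not merely $\|u-M^{-1}v\|$) satisfies \eqref{e:RcAssumption1} — this uses that approximating $M^{-1}v$ to accuracy $\xi/\|M\|$ yields a residual bounded by $\xi$, and that feasibility of the IF directions is automatic from the null-space parametrization regardless of the inner solver error.
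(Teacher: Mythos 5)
Your proposal is correct and follows essentially the same route the paper takes: the paper does not prove this result in detail but imports it from \cite{augustino2023quantum}, and its surrounding discussion outlines exactly your decomposition (short-step outer iteration bound of $\Ocal(\sqrt{n}\log(n/\epsilon))$, a polynomial of degree $\Ocal(\kappa\log(1/\xi))$ applied via $2\deg(q)+1$ matrix-vector products with $M$, and $\Ocal(n^4)$ cost per product when $m=\Ocal(n^2)$). Your closing remark on verifying that the normal-equations solver yields a residual satisfying \eqref{e:RcAssumption1}, with feasibility guaranteed by the null-space parametrization, is precisely the bookkeeping the cited analysis relies on.
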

We showed in Theorem~\ref{theo: conditionAHO} that $\kappa=\frac{\omega^2\kappa_T}{\epsilon}$ for the OSS system. Thus, the complexity of IF-IPM using CGM has a linear dependence on inverse precision. Likewise, the proposed IR method can be used to improve the complexity of classical IF-IPM with respect to precision.

When the derived IF-IPM is used in our IR methodology, one obtains an IR-IF-IPM with superior complexity bound.

\begin{theorem}
A classical implementation of the IR-IF-IPM obtains a feasible $\epsilon$ optimal solution for an SDO problem using at most $\Ocal(n^{4.5}\omega^2\kappa_{T}\log(\frac{n\mu^0}{\epsilon}))$ arithmetic operations.
\end{theorem}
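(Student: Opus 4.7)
The plan is to combine three ingredients: the outer IR iteration bound from Corollary~\ref{corr:irSDO}, the classical IF-IPM short-step iteration count, and a careful bound on the per-iteration cost of solving the OSS by CGM, with the key observation that keeping the refining tolerance at a fixed constant $\epsilon_0$ prevents the condition number from blowing up.

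First I would argue that throughout the entire run of the IR-IF-IPM the condition number of each OSS system is uniformly bounded by $\Ocal(\omega^2 \kappa_T)$. Because every call to the IF-IPM subroutine targets only the fixed constant precision $\epsilon_0$, the inner central path parameter $\mu$ never falls below a constant; combined with Theorem~\ref{theo: conditionAHO}, which gives $\kappa(M^{(k)})=\Ocal(\omega^2 \kappa_T/\mu)$, this yields the desired uniform bound. Substituting this into the CGM analysis already used for the IF-IPM, each OSS solve costs $\Ocal(n^4\,\omega^2\kappa_T)$ arithmetic operations (the $n^4$ comes from matrix-vector products with the OSS coefficient matrix, multiplied by the polylogarithmic CGM iteration count in the uniformly bounded $\kappa$).

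Next I would count inner IF-IPM iterations across all IR rounds. At IR iteration $k$ the refining problem is initialized at a strictly feasible point whose complementarity gap is $\mathring{\mu}^{(k)}=\Ocal((\eta^{(k)})^2)$; by the quadratic convergence established in Theorem~\ref{theo: Quadcon}, $\eta^{(k)} = 1/(X^{(k)}\bullet S^{(k)})$ grows at most doubly exponentially in $k$. The short-step IF-IPM therefore needs $T_k = \Ocal(\sqrt{n}\log(n\mathring{\mu}^{(k)}/\epsilon_0))$ iterations. Summing over $k = 1,\dots,K$ with $K=\Ocal(\log\log(1/\epsilon))$ from Corollary~\ref{corr:irSDO}, the $\log \mathring{\mu}^{(k)}$ terms form a geometric series that telescopes to $\Ocal(\sqrt{n}\log(n\mu^0/\epsilon))$ total IF-IPM iterations, matching the logarithmic precision dependence in the theorem.

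Multiplying the uniform per-iteration cost $\Ocal(n^4\omega^2\kappa_T)$ by the cumulative iteration count $\Ocal(\sqrt{n}\log(n\mu^0/\epsilon))$ yields $\Ocal(n^{4.5}\omega^2 \kappa_T \log(n\mu^0/\epsilon))$. The remaining work per IR round (forming $\bar b^{(k)}$, $\bar C^{(k)}$, and updating $(X,y,S)$) is $\Ocal(mn^2)$ and is absorbed into the dominant term. The main obstacle I anticipate is the bookkeeping in the second step: justifying that, despite $\mathring{\mu}^{(k)}$ growing doubly exponentially with $k$, the double-logarithmic IR iteration bound causes the sum $\sum_k \log \mathring{\mu}^{(k)}$ to collapse to $\Ocal(\log(1/\epsilon))$ rather than something like $\Ocal(\log^2(1/\epsilon))$, and verifying that the uniform condition-number bound indeed holds at all points visited by the IF-IPM, including the initial interior solutions proposed in Section~\ref{sec: IR}, where $\mu$ sits at a known constant scale.
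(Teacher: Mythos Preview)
Your approach is sound and in fact considerably more detailed than the paper's own treatment: the paper states this theorem without proof, merely remarking beforehand that since the IF-IPM has complexity $\Ocal(n^{4.5}\kappa\cdot\text{polylog})$ with $\kappa=\omega^2\kappa_T/\epsilon$, ``the proposed IR method can be used to improve the complexity of classical IF-IPM with respect to precision.'' The paper's implicit argument mirrors its proof of Theorem~\ref{theo: IR-IF-QIPM}, which simply cites the IR iteration bound and the inner-solver complexity and absorbs everything into polylog factors. Your explicit telescoping computation---observing that $\log\mathring{\mu}^{(k)}$ grows like $2^k$ while the number of IR rounds is $K=\Ocal(\log\log(1/\epsilon))$, so that $\sum_{k\le K}\log\mathring{\mu}^{(k)}=\Ocal(2^K)=\Ocal(\log(1/\epsilon))$---is exactly the right bookkeeping to turn the paper's assertion into a proof, and it resolves the concern you flag at the end.

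One point worth tightening that neither you nor the paper addresses explicitly: your uniform condition-number bound invokes Theorem~\ref{theo: conditionAHO} with the \emph{original} $\omega$, but the OSS systems you actually solve come from the refining problems \eqref{e:standP_ref}--\eqref{e:standD_ref}, whose iterates $(\hat X,\hat S)$ are scaled by $\eta^{(k)}$. You should argue that the relevant eigenvalue ratios (equivalently, the condition numbers of $\hat X$ and $\hat S$ along the inner central path) remain controlled by the original $\omega$ and the lower bound on $\mu$; since $\hat X/\eta^{(k)}$ is feasible for the original primal and $\hat S/\eta^{(k)}$ relates back to the original dual, this can be done, but it deserves a sentence. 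The paper glosses over this as well, so your proof would actually be more complete than the paper's once you add it.
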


\end{document}